\documentclass[12pt, reqno]{amsart}
\subjclass[2020]{16E05, 14Q99, 16Z05, 15A75, 68W10}
\keywords{Schreyer resolutions, exterior algebras, relative Gr\"obner bases, syzygies, parallel computation}
\usepackage{graphicx} 
\usepackage[utf8]{inputenc}
\usepackage[english]{babel}
\usepackage{amsthm}
\usepackage{pdfpages}
\usepackage{listings}
\definecolor{backcolour}{rgb}{1,1,1}

\usepackage{subcaption}
\usepackage{hyperref}
\usepackage{amssymb}
\usepackage{mathtools,tikz-cd}
\usepackage{latexsym}
\usepackage{parskip}
\usepackage{graphicx}
\graphicspath{{images/}}
\usepackage[inner=1.2in, outer=1in]{geometry}
\usepackage{mathrsfs}
\DeclareMathSymbol\unlhd{\mathrel}{lasy}{"02}
\theoremstyle{definition}
\newtheorem{theorem}{Theorem}
\newtheorem{propn}{Proposition}
\usepackage{chngcntr}
\counterwithin{theorem}{section}
\counterwithin{propn}{section}
\newtheorem{corollary}{Corollary}[theorem]
\counterwithin{corollary}{section}

\theoremstyle{definition}
\newtheorem{example}{Example}
\counterwithin{example}{section}
\theoremstyle{remark}

\theoremstyle{definition}
\newtheorem{definition}{Definition}[section]
\usepackage{filecontents}
\usepackage[nice]{nicefrac}
\usepackage{appendix}
\usepackage{algorithm}
\usepackage{algpseudocode}
\usetikzlibrary{petri,shapes}
\usetikzlibrary{positioning, arrows}

\makeatletter
\let\original@lbibitem\@lbibitem
\def\@lbibitem[#1]#2{%
  \def\currentbibkey{#2}%
  \def\singularbibkey{DGPS}%
  \ifx\currentbibkey\singularbibkey
    \original@lbibitem[BDGPS26]{#2}%
  \else
    \original@lbibitem[#1]{#2}%
  \fi
}
\makeatother

\title{Schreyer Resolutions over the Exterior Algebra}

\author{Janko B\"ohm}
\address[Janko B\"ohm]{Fachbereich Mathematik, RPTU Kaiserslautern-Landau, D-67653 Kaiserslautern}
\email{boehm@mathematik.uni-kl.de}
\thanks{{Gef\"ordert durch die Deutsche Forschungsgemeinschaft (DFG) - Projektnummer 286237555 - TRR 195 (Funded by the Deutsche Forschungsgemeinschaft (DFG, German Research Foundation) - Project-ID 286237555 - TRR 195). The work of JB was supported by Project B5 of SFB-TRR 195 and Potentialbereich \emph{SymbTools - Symbolic Tools in Mathematics and their Application} of the Forschungsinitiative Rheinland-Pfalz.}} 

\author{Lakshmi Ramesh}
\address[Lakshmi Ramesh]{Fakult\"at f\"ur Mathematik, Universit\"at Bielefeld, D-33615 Bielefeld}
\email{lramesh@math.uni-bielefeld.de}

\begin{document}

\begin{abstract}

    Schreyer's algorithm is usually the fastest way to determine a (typically non-minimal) free resolution of a finitely presented module over a polynomial ring. We adapt the refined Schreyer algorithm from \cite{erocal2016refined} to compute free resolutions over the exterior algebra, relying on relative Gr\"obner bases. We illustrate the use of our algorithm to compute the cohomology of coherent sheaves over projective space, which by the BGG correspondence and \cite{eisenbud2003sheaf} can be computed via free resolutions over the exterior algebra. Schreyer's method relies on a tree traversal and thus has the potential for parallel computations. We report on ongoing work on a massively parallel implementation, observing that Gnawali's parallel approach over polynomial rings \cite{gnawali2024massively} can be carried over to our setting.
\end{abstract}

\maketitle

\section{Introduction}\label{section:intro}

In this paper, we develop an algorithm to compute Schreyer-type free resolutions of modules over the exterior algebra. Our approach is based on the refined Schreyer algorithm, developed by Er\"ocal, Motsak, Schreyer and Steenpass in \cite{erocal2016refined}. Our algorithm is implemented in the library \texttt{sresext.lib} of the computer algebra system \textsc{Singular} \cite{DGPS}.\footnote{The library is contained in the latest release of \cite{DGPS} as well as, in its latest version, in the GitHub repository of \textsc{Singular}, see \url{https://github.com/Singular}.}  Our motivation arises from the Bernstein-Gel'fand-Gel'fand correspondence, which allows one to compute sheaf cohomology of coherent sheaves  over projective spaces via free resolutions over the exterior algebra. A further motivation is that Schreyer-type algorithms to determine syzygies and resolutions have, in contrast to more classical methods based on Gröbner basis techniques to find syzygies, a significant potential for parallelisation. In the commutative case, massively-parallel free resolution algorithms have been implemented in \cite{G25, gnawali2024massively}. This implementation relies on the \textsc{Singular}/\textsc{GPI-Space} framework \cite{BDPRR21} as its technical foundation, and uses Petri-nets for coordination. While our current implementation is just a demonstrator for our algorithm, a powerful \textsc{Singular}-kernel based implementation is currently under development. It will also be the basis for the computational units in a high-performance massively-parallel version. In Section~\ref{section:parallel}, we show that existing parallel techniques for the commutative case can be extended to our non-commutative setup. 

The exterior algebra $E$ of a vector space $V$ can be defined as follows. Let $K$ be an algebraically closed field, and $K\langle V \rangle $ be the free associative algebra generated by an ordered basis $\{x_1, \dots, x_n \}$ of $V$. We then define the $K$-algebra $\mathcal N$ as the quotient $\nicefrac{K \langle V \rangle}{J}$ where $J$ is the two-sided ideal of $K\langle V \rangle$ generated by $\{x_i x_j + x_j x_i | 1 \leq i < j \leq n \}$.  $\mathcal {N}$ is a ring with no non-trivial zero divisors, that has a PBW (Poincar\'e-Birkhoff-Witt) basis, and is sometimes called a PBW algebra. 
We describe $\mathcal N$ as anti-commutative (if $\operatorname{char}(K)\neq 2$, it is commutative if $\operatorname{char}(K) = 2$) due to the property that the order of multiplication affects the sign of the product. That is, $x_i x_j = -x_jx_i$ for $i \neq j$. Taking the quotient of $\mathcal N$ by the ideal $I$ generated by $\{ x_i^2 | 1 \leq i \leq n \}$, we obtain the exterior algebra (of an $n$-dimensional $K$-vector space) $E = \nicefrac{\mathcal N}{I}$. In fact, this is how exterior algebras are constructed in \textsc{Singular}.

In \cite{erocal2016refined}, the authors develop a refined Schreyer algorithm, which computes syzygy modules and free resolutions over the polynomial ring. It is ``refined" in the following sense. The algorithm starts with the Schreyer frame, which is the sequence of module morphisms of lead modules associated to a Schreyer resolution \cite{la1998strategies}, the tail terms are determined as sums of terms corresponding to vertices of trees rooted in the lead terms. One of the main improvements of the algorithm in \cite{erocal2016refined} is that it avoids computation of so-called lower order terms that do not contribute to syzygies. This approach is thus significantly faster than the original Schreyer algorithm and is the current state-of-the art for computing (typically non-minimal) free resolutions over polynomial rings. An efficient implementation of the algorithm (in the commutative setting) is available in \textsc{Singular}. We adapt this approach to the setting of modules over the exterior algebra.

One of the primary applications of the computation of free resolutions over the exterior algebra lies in the computation of sheaf cohomology. %
This is due to Eisenbud, Fl{\o}ystad and Schreyer \cite{eisenbud2003sheaf}, who showed that one can compute sheaf cohomology of coherent sheaves over projective space by computing free resolutions of modules over the exterior algebra. This is true due to a correspondence between the derived category of bounded complexes of coherent sheaves, and that of finitely generated modules over the exterior algebra, called the BGG correspondence, as described in the work of Bernstein, Gel'fand and Gel'fand in \cite{bgg78}. 
In this paper, we provide examples of cohomology computations based on our resolution algorithm to illustrate this.

In our approach, the refined Schreyer algorithm is adapted to the anti-commutative setting of $\mathcal N$.
In combination with the theory of relative Gr\"obner bases, we compute the syzygy $E$-module by first computing the syzygy $\mathcal N$-module of the corresponding lifted module. The set of canonical representatives of the $E$-Gr\"obner basis in the lifted $\mathcal N$-module is a Gr\"obner basis relative to the ideal $I = \langle x_i^2 \mid 1 \leq i \leq n \rangle$. This fact allows us to construct a Gr\"obner basis of the lifted module. The syzygy computation is performed over $\mathcal N$ using the adapted refined Schreyer algorithm. Making use of relative Gr\"obner basis methods for modules over quotient rings, we can finally obtain an $E$-Gr\"obner basis for the syzygy $E$-module lying below the syzygy $\mathcal N$-module. 

In Section \ref{section:nc-fundamentals}, we introduce our notation and conventions for the anti-commutative setting. We also adapt Schreyer's theorem to syzygies over $\mathcal{N}$.
In Section \ref{section:syz-ext}, we apply relative Gr\"obner basis methods to address resolutions over the exterior algebra.
In Section \ref{section:schreyer-algorithm}, we generalize the refined Schreyer resolution algorithm first to $\mathcal{N}$, and then to the exterior algebra setting.
In Section \ref{bgg} we discuss the application of the Schreyer resolution algorithm over the exterior algebra to computing sheaf cohomology. In particular, as a tool in the cohomology algorithm, we discuss the efficient minimization of the Schreyer resolution.
In Section \ref{section:parallel}, we discuss the feasibility of a parallel implementation.

\textbf{Acknowledgements.} We are grateful for the technical help from Dr. Hans Sch\"onemann in the implementation of the algorithm in \texttt{Singular}. We also thank Dr. Santosh Gnawali for his help in conceptualising the parallel algorithm. 

\section{Non-Commutative Fundamentals}\label{section:nc-fundamentals}

\subsection{Gr\"obner Bases over $\mathcal N$}\label{groebnerbases}

We define monomial orderings on the PBW algebra $\mathcal{N}$, which will induce orderings on $E$. We call $\{ x_1^{\alpha_1} \dots x_n^{\alpha_n}  | \alpha_i \in \mathbb Z_{\geq 0} \forall i \}$, where $x_1, \dots, x_n$ are in a (chosen) lexicographic order, the set of standard monomials in $\mathcal N$. For an element $f \in \mathcal N$, we denote the lead monomial of $f$ written in standard form by $\operatorname{LM}(f)$ (which is now simply referred to as the lead monomial), the lead coefficient of $f$ by $\operatorname{LC}(f)$, and the lead term of $f$ by $\operatorname{LT}(f)$. We use the following definition of monomial orderings over non-commutative rings from \cite[II. Def. 1.2]{li2007noncommutative}.

For exponent vectors $\alpha, \gamma$, we write $x^\alpha x^\gamma = \varepsilon (\alpha, \gamma)x^{\alpha + \gamma}$, with $\varepsilon(\alpha, \gamma) = (-1)^{\sum_{p>q}\alpha_p \gamma_q}$.
\begin{definition}\label{def:non-commutative-ring}
    Let $>$ be a well ordering on the set of standard monomials of $\mathcal N$. $>$ is a monomial ordering on $\mathcal N$ if it satisfies:
    \begin{itemize}
        \item For any non-zero monomials  $w,v,s,u$ in $\mathcal N$ with $u > w$ and $\operatorname{LM}(vws) \neq 0$, $\operatorname{LM}(vus) \neq 0$, then $\operatorname{LM}(vus) > \operatorname{LM}(vws)$.
        \item For any non-zero monomials $w,v,s,u$ in $\mathcal N$, if $u = \operatorname{LM}(vws)$ where $v \neq 1$ or $s \neq 1$, then $u > w$. (Hence $u>1$ for all non-zero monomials $u \neq 1$.) 
        \item $w$ is said to be left divisible by $u$, that is $u|w$, if $w = \operatorname{LM}(vu)$ for some monomials $v \in \mathcal N$. 
    \end{itemize}
\end{definition}

This is compatible with monomial orderings on the polynomial ring as the order of multiplication only affects the coefficient. Hence, a monomial ordering on the polynomial ring is also a monomial ordering on $\mathcal N$. 
A monomial ordering $>_m$ on an $\mathcal N$-module must be, as in the commutative case, compatible with an ordering $>$ on $\mathcal N$. 
As in the commutative case, the monomial ordering may give preference to the monomial or the component. 

In this text, we consider left modules over $\mathcal N$ that are finitely generated. That is, a module $M$ has a finite left-basis $m_1, \dots, m_r$ such that any element $m \in M$ can be written as $m = \sum_{i=1}^r u_i m_i$, $u_i \in \mathcal N$. 
Since $\mathcal N$ is a left- and right-Noetherian ring, it follows that $M$ is also finitely presented and hence has finite Gr\"obner bases, see \cite[Prop. 2.1]{MR3243257}.
We use the following definition of Gr\"obner bases of modules over non-commutative rings, which is an extension of the definition in \cite[II. Def. 3.2]{li2007noncommutative}.
\begin{definition}\label{def:groebner-basis}
    Let $M$ be a left $\mathcal N$-module generated by a finite subset $\mathcal G \subset M$. If every $p \in M$ has a presentation $$p = \sum_i  u_i g_i \; \; \; \; u_i\in \mathcal N,\; g_i \in \mathcal G,$$ with $\operatorname{LM}(p) \geq_m \operatorname{LM}(u_i g_i)$, then $\mathcal G$ is a left Gr\"obner basis of $M$ with respect to $>_m$. We call a presentation of an element $p$ as an $\mathcal N$ linear sum of elements of $\mathcal G$ as a standard presentation of $p$ with respect to $\mathcal G$. It is not necessarily unique. 
\end{definition}
Let $s = a x^\alpha e_{i}$ and $t = b x^\gamma e_{j}$ be two terms in $M$ for $a,b \in K$ such that $\alpha \leq \gamma$. Then, we define the left quotient of $t$ by $s$ as 
\begin{align*}
    \operatorname{lquot}(t,s) = \frac{b}{a \varepsilon(\gamma - \alpha, \alpha)}x^{\gamma - \alpha}.
\end{align*}

The normal form of an element in an associative algebra is defined by the reduced division algorithm described in \cite[Section II.2]{li2007noncommutative}, see Algorithm \ref{alg:n-division}. Let $M\subset F=\mathcal N^m$ and let $\mathcal G$ be a Gr\"obner basis of $M$. By \cite[II. Prop. 3.3]{li2007noncommutative}, this reduced division defines the unique normal form map
\[
\operatorname{NF}(-,\mathcal G)\colon F\longrightarrow
\operatorname{span}_K\{x^\alpha e_j\mid \operatorname{LM}(g)\nmid x^\alpha e_j\text{ for all }g\in\mathcal G\}.
\]
However, the standard presentation is not unique. It is clear from Algorithm \ref{alg:n-division} and Definition \ref{def:groebner-basis} that $\mathcal G$ is a Gr\"obner basis of $M$ if and only if for every $p \in M$, $\operatorname{NF}(p, \mathcal G) = 0$.

\begin{algorithm}[ht]
\caption{Reduced division of $p$ by $\mathcal G$}\label{alg:n-division}
\textbf{Input:} A Gr\"obner basis $\mathcal G = \{f_1, \dots, f_r\}$ of $M$ with respect to $>_m$, an element $p$ in ambient free module.\\
\textbf{Output:} A tuple $(q,r_0)$ where $p = q + r_0 $ and $q= \sum_{i=1}^r \alpha_i f_i , r_0 = \operatorname{NF}(p,\mathcal G)$ for $\alpha_i \in \mathcal N$ such that $\operatorname{LM}(p) \geq_m \operatorname{LM}(\alpha_i f_i)$ and no term of $\operatorname{NF}(p, \mathcal G )$ is divisible by $\operatorname{LM}(f_i)$ for any $1 \leq i \leq r$.
\begin{algorithmic}[1]
\State $g = p$, $r_0 = 0$, $q=0$
\While {$g \neq 0$}
    \If {$\text{LM}(f_i) \mid \operatorname{LM}(g)$ for some $f_i \in \mathcal G$}
        \State $a = \operatorname{lquot}(\operatorname{LT}(g),\operatorname{LT}(f_i))$
        \State $q += a f_i$
        \State $g+= - a f_i$
    \Else 
        \State $r_0 += \operatorname{LT}(g)$
        \State $g += -\operatorname{LT}(g)$
    \EndIf
\EndWhile
\State \textbf{return} $(q, r_0)$
\end{algorithmic}
\end{algorithm}

We also introduce some conventions required for the algorithm in \cite{erocal2016refined}. 

\begin{definition}\label{def:nomenclature}
Let $F := \mathcal N^r$ be a left free $\mathcal N$-module of rank $r$. Let $e_1, \dots e_r$ be a basis of $F$.  
    \begin{enumerate}
        \item A monomial in $F$ is a product of a monomial $m$ in $\mathcal N$ with a basis element $e_i$, $1 \leq i \leq r$.
        \item A term in $F$ is a monomial (as defined above), with a coefficient from $K^*$. 
        \item The least common multiple of two monomials $f_i = m_1 e_i$ and $f_j = m_2 e_j$ is defined as
        $$ \operatorname{LCM}(f_i, f_j) = \begin{cases}
            \operatorname{LCM} (m_1, m_2) e_i \quad \; \; \operatorname{if }i = j\\
            0 \qquad \qquad \qquad \qquad \text{otherwise.}
        \end{cases} $$
        The LCM of $m_1, m_2$ is written as per the convention in Section \ref{groebnerbases}, as a monomial in standard form. That is, the sign of the resulting anti-commutative product is ignored. 
        \item A monomial $f_i = m_1 e_i$ is said to divide another monomial $f_j = m_2 e_j$ if $i = j$ and if $m_1$ divides $m_2$ (as defined in Definition \ref{def:non-commutative-ring}). The quotient lies in $\mathcal N$.
    \end{enumerate}
\end{definition}

These definitions may be appropriately extended to submodules of free modules. 

Let $M$ be a submodule of $F$, a left free module of $\mathcal N$. We denote the left module generated by the lead monomials of all non-zero elements of $M$ as $ \operatorname{L}(M)$. This is called the\textit{ (left) lead module.}
By Definition~\ref{def:groebner-basis} and \cite[II. Prop. 4.2]{li2007noncommutative}, $\mathcal G$ is a left Gr\"obner basis of $M$ if and only if the lead module $\operatorname{L}(M)$ is generated by the lead monomials $\{\operatorname{LM}(g_i) \mid g_i \in \mathcal G \}$. We denote the left module generated by a set $S$ as $\langle S]$.

\subsection{Syzygy Modules over $\mathcal N$} 

\begin{definition}
    Let $\mathcal G = \{f_1, \dots, f_r\}$ be a set of generators of a finitely generated left $\mathcal N$-module $M$. Then, a left-syzygy of $M$ is an element $(m_1, \dots, m_r) \in \mathcal N^r$, such that $\sum_{i=1}^r m_i f_i = 0$. The set of all syzygies of $M$ forms a submodule of $\mathcal N^r$, called the syzygy module $\operatorname{Syz}_\mathcal N(\mathcal G)$ of $\mathcal G$.
\end{definition}
Consider the morphism $\phi: \mathcal N^r \rightarrow M \subset \mathcal N^m$, where $\phi(e_i) = f_i$. Then, $\operatorname{Syz}_{\mathcal N}(\mathcal G) = \operatorname{ker}(\phi)$.
As we will see from Theorem \ref{schreyers-theorem}, if we start with generators $f_1, \dots, f_r$ of $M$ that form a Gr\"obner basis, we can compute a Gr\"obner basis of $\operatorname{Syz}_{\mathcal N}(\mathcal G)$
with respect to the Schreyer ordering.

\begin{definition}
Let $\mathcal G=(f_{1},\ldots,f_{r})$ be an ordered set of elements in $\mathcal N^{m}$ and $>_m$ a monomial ordering on $\mathcal N^{m}$. The monomial ordering $\succ_\mathcal{G}$ on $\mathcal N^{r}$
defined by%
$$x^{\alpha}e_{i}\succ_\mathcal{G} x^{\beta}e_{j}\Longleftrightarrow \operatorname{LM}(x^{\alpha}f_{i})>_m \operatorname{LM}(x^{\beta}f_{j})\text{ or }\left(  \operatorname{LM}(x^{\alpha}f_{i})=\operatorname{LM}(x^{\beta}%
f_{j})\text{ and }i<j\right)$$
is called the \textit{Schreyer ordering} on $\mathcal N^r$ induced by $>_m$ and $\mathcal G$. 
\end{definition}

Given a Gr\"obner basis of $M$, we can define the S-vectors, or syzygy vectors, from which we construct the Gr\"obner basis of the syzygy module. Let $\mathcal G = \{ f_1, \dots, f_r\}$ be a Gr\"obner basis of $M$ with respect to $>_m$. 
Then for every $i,j \in \{ 1, \dots , r \}$ with $\operatorname{LCM}(\operatorname{LM}(f_j), \operatorname{LM}(f_i))\neq 0$ write 
$$ m_{ij} := \operatorname{lquot}(\operatorname{LCM} ( \operatorname{LM}(f_j), \operatorname{LM} ( f_i )) , \operatorname{LT}(f_j)).$$

We then can define the S-vector of $f_i, f_j$ by 
$$\operatorname{S-vector}(f_i, f_j)= m_{ji}f_i -m_{ij}f_j.$$
The aim of the S-vector is to cancel the lead terms of $f_i$ and $f_j$. 
Note that, when all monomials of the S-vector are written in standard form, the lead terms indeed cancel.
We can now state the anti-commutative Schreyer theorem, which constructively shows us the existence of a Gr\"obner basis of the syzygy module $\operatorname{Syz}_{\mathcal N}(\mathcal G)$.

\begin{theorem}[Anti-commutative Schreyer Theorem]\label{schreyers-theorem}
    Let $\mathcal G = \{ f_1 , \dots , f_r \} \subset \mathcal N^m$ be a Gr\"obner basis of a finitely generated left $\mathcal N$-module $M$ with respect to a module monomial ordering $>_m$ on $\mathcal N^m$. Then for each pair $(f_i, f_j)$ where $\operatorname{LCM}(\operatorname{LM}(f_j), \operatorname{LM}(f_i)) \neq 0$ and $1 \leq i < j \leq r$, let 
    $$\operatorname{S-vector}(f_i, f_j) = \sum_{k=1}^r g_k^{(ij)} f_k$$
    be a standard presentation of $\operatorname{S-vector}(f_i, f_j)$ with respect to $\mathcal G$. Denote by $e_1, \dots , e_r$ the canonical basis of $\mathcal N^r$. 
    Then
    $$\operatorname{s}(f_i,f_j) : = m_{ji}e_i - m_{ij}e_j - (\sum_{k=1}^r g_k^{(ij)} e_k)$$
    for all $i<j \in \{1, \dots r\}$ with $\operatorname{LCM}(\operatorname{LM}(f_j), \operatorname{LM}(f_i)) \neq 0$
    form a Gr\"obner basis $\mathcal G'$ of $\operatorname{Syz}_{\mathcal N}(\mathcal G)$ with respect to the Schreyer ordering $\succ_\mathcal G$ induced by $>_{m}$ and $\mathcal G$. 
\end{theorem}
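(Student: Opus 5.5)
I will follow the commutative proof of Schreyer's theorem (as in Greuel--Pfister), keeping track of the signs $\varepsilon(\alpha,\gamma)$. The sign conventions are set up so that lead monomials multiply as in the commutative case up to a unit, and $\mathcal N$ has no zero divisors. The argument has three steps.

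\textbf{Step 1: the $s(f_i,f_j)$ are syzygies with the expected lead terms.} By construction,
\[
\phi\bigl(s(f_i,f_j)\bigr)=\operatorname{S-vector}(f_i,f_j)-\sum_k g_k^{(ij)}f_k=0,
\]
so each $s(f_i,f_j)$ lies in $\operatorname{Syz}_{\mathcal N}(\mathcal G)$. Next I show
\[
\operatorname{LT}_{\succ_\mathcal G}\bigl(s(f_i,f_j)\bigr)=m_{ji}e_i \qquad (i<j).
\]
The monomials $m_{ji}e_i$ and $m_{ij}e_j$ both map under $\phi$ to the same monomial $\operatorname{LCM}(\operatorname{LM}(f_i),\operatorname{LM}(f_j))$ in $\mathcal N^m$. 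This uses that $\operatorname{LM}(x^\alpha f)=x^\alpha\operatorname{LM}(f)$ up to sign whenever it is nonzero, which follows from the first axiom of Definition~\ref{def:non-commutative-ring}. Since $i<j$, the tie-break gives $m_{ji}e_i\succ_\mathcal G m_{ij}e_j$. In the standard presentation we have $\operatorname{LM}(g_k^{(ij)}f_k)\le_m \operatorname{LM}(\operatorname{S-vector})<_m\operatorname{LCM}$, because the lead terms cancel. Hence every term of $g_k^{(ij)}e_k$ is $\prec_\mathcal G m_{ji}e_i$.

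\textbf{Step 2: the lead module of $\operatorname{Syz}_{\mathcal N}(\mathcal G)$.} Let $0\neq h=\sum_k h_ke_k$ be a syzygy, and put $\operatorname{LT}_{\succ_\mathcal G}(h)=c\,x^\alpha e_i$. Consider all indices $k$ with $\operatorname{LM}(\operatorname{LM}(h_k)f_k)=\operatorname{LM}(x^\alpha f_i)=:w$. The lead terms of these summands in $\phi(h)$ are the only contributions to $w$, and $\phi(h)=0$, so they must cancel. Since $i$ is the smallest such index by definition of $\succ_\mathcal G$, cancellation forces at least one other such index $j>i$. For this $j$, both $\operatorname{LM}(f_i)$ and $\operatorname{LM}(f_j)$ divide $w$ in the sense of Definition~\ref{def:nomenclature}, so their LCM is nonzero and divides $w$. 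Therefore $x^\alpha e_i$ is a left multiple of $m_{ji}e_i$, i.e.\ $m_{ji}\mid x^\alpha$ after writing in standard form. It follows that the lead monomials $\{m_{ji}e_i\}$ generate $\operatorname{L}(\operatorname{Syz}_{\mathcal N}(\mathcal G))$. By the criterion recalled after Definition~\ref{def:nomenclature} (from \cite[II. Prop. 4.2]{li2007noncommutative}), the $s(f_i,f_j)$ then form a Gröbner basis.

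\textbf{Main obstacle.} The delicate point is the divisibility claim in Step 2: $\operatorname{LM}(f_i)\mid w$ and $\operatorname{LM}(f_j)\mid w$ should imply $\operatorname{LCM}\mid w$, with the left quotient agreeing with $m_{ji}$ up to a nonzero sign. I would prove this directly on exponent vectors. In $\mathcal N$, standard monomials multiply as $x^\alpha x^\gamma=\varepsilon(\alpha,\gamma)x^{\alpha+\gamma}$ with no zero products, so divisibility is componentwise $\le$ exactly as in $K[x]$, and the LCM is the componentwise maximum. All sign factors are units $\pm1$ and can be absorbed into coefficients.

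A second subtlety is making sure the order $\succ_\mathcal G$ is a module monomial ordering in the sense of Definition~\ref{def:non-commutative-ring}. I would check this by the same exponent-vector reasoning: left multiplication by $x^\beta$ shifts exponents by $\beta$ and only introduces a sign.
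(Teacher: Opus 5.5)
Your proof is correct, but it takes a different route from the paper. The paper gives no internal argument. It obtains the theorem as a special case of Levandovskyy's Schreyer theorem for modules over solvable polynomial algebras ($G$-algebras), \cite[Thm.~4.8]{Levandovskyy2005}. Here $\mathcal N$ is the quasi-commutative instance with relations $x_jx_i=-x_ix_j$.

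You instead carry the commutative proof over directly. The key facts you rely on are these:
\begin{itemize}
\item In $\mathcal N$ a product of standard monomials is $\pm$ a standard monomial and never zero, so divisibility and LCMs are the componentwise ones of $K[x]$.
\item The normalisation $\varepsilon(\gamma-\alpha,\alpha)$ in $\operatorname{lquot}$ gives the exact identity $m_{ji}\operatorname{LT}(f_i)=m_{ij}\operatorname{LT}(f_j)$, signs included.
\end{itemize}
This identity is what makes the S-vector lead terms cancel and forces $\operatorname{LT}_{\succ_{\mathcal G}}(s(f_i,f_j))=m_{ji}e_i$. Your Step 2 is then the standard lead-term cancellation argument, using the tie-break in $\succ_{\mathcal G}$ to produce an index $j>i$.

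What your route buys is a self-contained proof that checks points the citation leaves to the reader: that the paper's sign conventions for $\operatorname{lquot}$ and S-vectors and its tie-break $i<j$ match the cited statement, and that $\succ_{\mathcal G}$ is a module monomial ordering on $\mathcal N^r$. What the citation buys is brevity and generality. It covers $G$-algebras whose relations carry lower-order correction terms, where products of monomials are no longer $\pm$ monomials. There your argument would have to be rephrased in terms of lead monomials of products rather than exact monomial products.
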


This is a specific case of a more general non-commutative Schreyer theorem for modules over solvable polynomial algebras, see \cite[Thm. 4.8]{Levandovskyy2005}.

\section{Computing Syzygies over the Exterior Algebra}\label{section:syz-ext}

Syzygy computation using the refined Schreyer Algorithm as described in Section \ref{section:schreyer-algorithm} requires both a Gr\"obner basis and the division algorithm, which are described for modules over PBW algebras, and therefore must be adapted to the case of modules over a quotient ring of a PBW algebra. For this we use the theory of relative Gr\"obner bases. This requires that we lift the given $E$-module $\Bar M \subset E^m$ to a $\mathcal N$-module $M \subset \mathcal N^m$, in order to compute the syzygy, see \eqref{diagram-lifting}, the corresponding presentation maps are as follows.

Recall that we define $E = \nicefrac{\mathcal N}{I}$ for 
\begin{align}\label{eq:I}
    I = \langle x_1^2, \dots, x_n^2 \rangle .
\end{align}
Let $\pi_m:\mathcal N^m \rightarrow E^m$ be the component-wise projection map and let $M = \pi^{-1}_m(\Bar{M})$. Let $B = (f_1, \dots, f_r)$ be an ordered set of elements in $M$ such that $\{\pi_m(f_1), \dots, \pi_m(f_r) \}$ generates $\Bar{M}$. Moreover, we order the standard generators of $I^m$ as 
\begin{align}\label{eq:gs}
    (b_1, \dots, b_{nm}) = (x_{i}^2 e_j \mid 1\leq i \leq n, 1 \leq j \leq m).
\end{align}
Let $\phi_{\mathcal N}$ and $\phi_E$ be the presentation maps $\phi_{\mathcal N}(u,v) = \sum_{i=1}^r u_i f_i + \sum_{j = 1}^{nm} v_j b_j$ and $\phi_E(\pi_r(u)) = \sum_{i=1}^r [u_i] \pi_m(f_i)$ respectively, and $\rho(u,v) = ([u_1], \dots, [u_r])$. Then, by Theorems~\ref{theorem:relative-gb} and \ref{thm:nsyztoesyz},
\begin{equation}\label{diagram-lifting}
\begin{tikzcd}
\mathcal N^{r + nm} \arrow[r, "\phi_{\mathcal N}" ] \arrow[d, "\rho"] & \mathcal N^m \arrow[d, "\pi_m"]\\
E^{r}  \arrow[r, "\phi_{E}"] & E^m
\end{tikzcd}
\end{equation}
commutes, and $\operatorname{ker}(\phi_{E})$ is the desired syzygy module.
In our method, a free resolution over the exterior algebra is computed by computing each syzygy over the lifted $\mathcal N$-module, and then projecting to the corresponding $E$-module. Then, the next syzygy is computed by again lifting the module, and so on.
At the outset of the resolution computation, \textsc{Singular} computes a Gr\"obner basis of the $E$-module. In this context, we wish to find the corresponding Gr\"obner basis of the lifted $\mathcal N$-module $M$, whose syzygies we can suitably project to obtain a Gr\"obner basis of the syzygy $E$-module. 

\subsection{Relative Gr\"obner Basis}\label{relativegb}

As discussed in Section \ref{groebnerbases}, the ring $\mathcal N$ can be equipped with a monomial ordering $>$. We want to define the corresponding monomial ordering $>_E$ on the quotient ring $E = \nicefrac{\mathcal N}{I}$. Consider the projection map $\pi: \mathcal N \rightarrow E$. Let $\sigma: E \rightarrow \mathcal N$ be its $K$-linear section taking each exterior monomial to its unique square-free
canonical representative. Then, 
\begin{gather*}
    m_1 >_E m_2 \iff \sigma(m_1) > \sigma(m_2).
\end{gather*}

These notions of projections and monomial orderings can be extended to the module case. Consider the free module $\mathcal N^m$. We fix the standard basis $\{ e_1, \dots ,e_m\}$ and a module monomial ordering $>_m$ that agrees with the ordering $>$ on $\mathcal N$. Then the projection map $\pi_m: \mathcal N^m \rightarrow E^m$ gives us a corresponding module monomial ordering $>_{Em}$ on $E^m$. Denoting $\sigma_m$ as the component-wise square-free representative in $\mathcal N$, we have that
\begin{gather}\label{monorderNE}
    m_i e_i >_{Em} m_j e_j \iff \sigma_m(m_i e_i) >_m \sigma_m(m_j e_j).
\end{gather}
Therefore given any $E$-module $\Bar M$ equipped with a module monomial ordering $>_{Em}$, there exists a corresponding $\mathcal N$-module $M := \pi_m^{-1}(\Bar M)$ with a corresponding module monomial ordering $>_m$ as in \eqref{monorderNE}.

In order to be able to define Gr\"obner bases in the setting of modules over quotient rings, we require a division algorithm that takes into account the quotient ideal $I$. 
Given a submodule $H$ and a Gr\"obner basis $\mathcal H = \{f_1, \dots , f_r\}$ of $H$, we wish to divide a vector $v$ by $\mathcal H$, where $\operatorname{NF}_{\mathcal H}(f_i) = f_i$ for all $i \in \{1, \dots , r\}$. We obtain a vector $\Bar{v}$ such that $\operatorname{NF}_{\mathcal H}(\Bar v) = \Bar v$ and $v - \sum_{i=1}^r q_i f_i - \Bar v \in  H$ such that no monomial of $\bar v$ is divisible by any lead monomial of any $f_k$ or $x_{i}^2e_j$ for any $i\in[n], j\in[m], k\in[r]$. We can do this algorithmically with Algorithm \ref{alg:relative-division} (as described in \cite[Alg. 1]{hashemi2021relative}). 

\begin{algorithm}[ht]
\caption{Relative Division}\label{alg:relative-division}
\textbf{Input:} A Gr\"obner basis $\mathcal H$ of $H$, a set of vectors $\{f_1, \dots , f_r\}$ that are all reduced with respect to $\mathcal H$ and a vector $v$.\\
\textbf{Output:} A vector $\Bar v$ such that no term of $\Bar v$ is divisible by $\operatorname{LM}(f_i)$ for any $f_i \in \{f_1, \dots , f_r\}$ or by any term in $\langle \operatorname{L}(H)]$.
\begin{algorithmic}[1]
\State $x := v$, $\Bar v := 0$
\For {$i = 1, \dots , r$}
    \State $q_i := 0$
\EndFor
\While {$x \neq 0$}
    \If {$\operatorname{LM}(x)\; \in \; \operatorname{L}(H)$}
        \State Choose $g\; \in \; \mathcal H$ such that $\operatorname{LM}(g) | \operatorname{LM}(x)$
        \State $x := x - \operatorname{lquot}(\operatorname{LT}(x),\operatorname{LT}(g))g$
    \ElsIf {$\operatorname{LM}(x) \; \in \; \langle\{ \operatorname{LM}(f_1) , \dots , \operatorname{LM}(f_r) \}]$}
        \State Choose $f_i$ such that $\operatorname{LM}(f_i) |  \operatorname{LM}(x)$
        \State $q_i := q_i + \operatorname{lquot}(\operatorname{LT}(x),\operatorname{LT}(f_i))$
        \State $x := x - \operatorname{lquot}(\operatorname{LT}(x),\operatorname{LT}(f_i))f_i$
    \Else 
        \State $\Bar v : = \Bar v + \operatorname{LT}(x)$
        \State $x : = x - \operatorname{LT}(x)$
    \EndIf
\EndWhile
\State \textbf{return} $\bar v$
\end{algorithmic}
\end{algorithm}

We can now tackle the issue of the correct Gr\"obner basis of $M$, given a Gr\"obner basis $\Bar B \subset \Bar M$, using \cite[Def. 3.7]{hashemi2021relative}.

\begin{definition}\label{def:relative_gb}
    Let $I$ be a two-sided ideal of $\mathcal N$ and let $M \subset \mathcal N^m$ be a finitely generated left $\mathcal N$-submodule of a free module containing $I^m$, and let $\mathcal I$ be a Gr\"obner basis of $I^m$. 
    \begin{itemize}
        \item A finite set $B \subset M$ such that $B \cap I^m = \emptyset$ is called a \textit{Gr\"obner basis of M relative to $I$} if 
        \begin{gather*}
            \langle \{\operatorname{LM}( f_i ) \mid f_i \in B\}]  + \operatorname{L}(\mathcal I) =  \operatorname{L}(M).
        \end{gather*}
        \item A finite set of coordinate-wise projections $\{ [f_1], \dots , [f_r]\} \subset \Bar M$ is a Gr\"obner basis of the $\nicefrac{\mathcal N}{I}$-module $\Bar M$ with respect to $>_{\Bar M}$ if the set of canonical lifts $\{ f_1, \dots f_r \}$ is a Gr\"obner basis of the $\mathcal N$-module $M$ relative to $ I$ with respect to $>_M$. 
    \end{itemize}
\end{definition}

The following theorem, see \cite[Prop. 3.4,3.6]{hashemi2021relative}, gives us the Gr\"obner basis of $M$ that will be the input of the adapted version of the Schreyer algorithm.

\begin{theorem}\cite{hashemi2021relative}\label{theorem:relative-gb}
    Let $I$ be a two-sided ideal of $\mathcal N$ and let $M \subset \mathcal N^m$ be a left $\mathcal N$-submodule of a free module, containing $I^m$ and let $\{ h_1, \dots , h_k\}$ be a Gr\"obner basis of $I$. Let  $\{e_1, \dots e_m\}$ be the standard basis of $\mathcal N^m$ and let $B$ be a finite set contained in $M$ such that $B \cap I^m = \emptyset$. Then the following are equivalent:
    \begin{enumerate}
        \item $B$ is a Gr\"obner basis of $M$ relative to $I$.
        \item $B \cup \{ h_i e_j \; | \; 1 \leq i \leq k, 1 \leq j \leq m \}$ is a Gr\"obner basis of $M$.
    \end{enumerate}
\end{theorem}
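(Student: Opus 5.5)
The equivalence reduces to the lead-module characterisation of Gröbner bases recalled at the end of Section~\ref{groebnerbases}: a finite set $\mathcal G\subset M$ is a left Gröbner basis of $M$ if and only if $\operatorname{L}(M)=\langle \{\operatorname{LM}(g)\mid g\in\mathcal G\}]$. Put $\mathcal I:=\{h_ie_j\mid 1\le i\le k,\ 1\le j\le m\}$. Both conditions then become statements about the same lead module $\operatorname{L}(M)$, and the proof amounts to comparing $\operatorname{L}(\mathcal I)$ with $\langle\{\operatorname{LM}(h_ie_j)\}]$.

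First I show that $\mathcal I$ is a Gröbner basis of the left module $I^m$. Take $p=\sum_j p_je_j\in I^m$, so each $p_j\in I$. The module ordering $>_m$ is compatible with the ordering on $\mathcal N$, so $\operatorname{LM}(p)=\operatorname{LM}(p_{j_0})e_{j_0}$ for some component $j_0$, and $\operatorname{LM}(p_{j_0})\neq 0$ lies in the lead ideal of $I$. Since $\{h_1,\dots,h_k\}$ is a Gröbner basis of $I$, the monomial $\operatorname{LM}(p_{j_0})$ is left divisible by some $\operatorname{LM}(h_i)$. Hence $\operatorname{LM}(p)$ is divisible by $\operatorname{LM}(h_ie_{j_0})=\operatorname{LM}(h_i)e_{j_0}$. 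It follows that $\operatorname{L}(I^m)=\langle\{\operatorname{LM}(h_ie_j)\}]$. This matches the notation $\operatorname{L}(\mathcal I)$ in Definition~\ref{def:relative_gb}, whichever of the two readings (the lead module of $I^m$, or the module generated by the lead monomials of $\mathcal I$) is intended.

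The step I expect to need the most care is this one, because "Gröbner basis of $I$" may refer to the left, right or two-sided structure. I will use it as a left Gröbner basis, which is the structure relevant for left modules. For $I=\langle x_1^2,\dots,x_n^2\rangle$ there is no difficulty. Each $x_i^2$ is central in $\mathcal N$, since $x_jx_i^2=x_i^2x_j$. So the left ideal and the two-sided ideal generated by the $x_i^2$ coincide, and they form a monomial Gröbner basis.

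Now both directions are formal.

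For (1)$\Rightarrow$(2): we have $B\cup\mathcal I\subset M$, because $I^m\subset M$. Then
\[
\langle\{\operatorname{LM}(g)\mid g\in B\cup\mathcal I\}]=\langle\{\operatorname{LM}(f)\mid f\in B\}]+\operatorname{L}(\mathcal I)=\operatorname{L}(M)
\]
by the first step and hypothesis (1). The lead-module criterion then gives (2).

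For (2)$\Rightarrow$(1): the same chain of equalities read in reverse gives the defining equation of a relative Gröbner basis. The condition $B\cap I^m=\emptyset$ is part of the hypotheses.

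A side point is that the lead-module criterion as stated assumes $\mathcal G$ generates $M$. I need to check that this follows from the lead condition. It does, by the usual argument: given $p\in M$, run Algorithm~\ref{alg:n-division}. Each step subtracts an element of $M$ and strictly lowers the lead monomial. The remainder lies in $M$ but has no monomial in $\operatorname{L}(M)$, so it is $0$. Termination follows because $>_m$ is a well ordering.
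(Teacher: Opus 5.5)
Your argument is correct and is the standard lead-module comparison behind the result that the paper only cites from \cite[Prop.~3.4, 3.6]{hashemi2021relative} without giving a proof of its own; your componentwise check that $\{h_ie_j\}$ is a Gr\"obner basis of $I^m$ supplies the step the paper merely asserts for $I=\langle x_1^2,\dots,x_n^2\rangle$. Your caveat about left versus two-sided Gr\"obner bases is harmless for every two-sided ideal $I\subset\mathcal N$: standard monomials of $\mathcal N$ commute up to sign, so lead monomials generate the same left and two-sided ideals, and any two-sided Gr\"obner basis of $I$ is already a left one.
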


Note that for our ideal $I$ under consideration as defined in \eqref{eq:I}, $\{x_i^2 e_j \; | \; 1\leq i \leq n, 1\leq j\leq m\}$ is a Gr\"obner basis of $I^m$. From the above result, we have that if $\{f_1, \dots, f_r \} \cap I^m = \emptyset$, $\{ \pi_m(f_1), \dots , \pi_m(f_r) \}$ is a left Gr\"obner basis of the $E$-module $\Bar M$ if and only if $$\{f_1,\dots f_r\} \cup \{x_i^2 e_j \; | \; 1\leq i \leq n, 1\leq j\leq m\}$$ is a left Gr\"obner basis of the $\mathcal N$-module $M$.

\subsection{From $\mathcal N$-syzygy to $E$-syzygy}\label{quotient}

We order the lifted Gr\"obner basis of $M$ as $$\mathcal G_M = (f_1, \dots, f_r, b_1, \dots, b_{nm}),$$
where $b_1, \dots, b_{nm}$ are the ordered generators of $I^m$, see \eqref{eq:gs}.
By Theorem~\ref{schreyers-theorem}, the result of the syzygy computation is the Gr\"obner basis of 
$\operatorname{Syz}_{\mathcal N}(\mathcal G_M)\subset \mathcal N^p$ where $p = r + nm$. 
The following theorem, applied to the PBW case, is the required result to obtain the Gr\"obner basis of $\operatorname{Syz}_E(\mathcal G_E)$, where $\mathcal G_E = \{\pi_m(f_1), \dots \pi_m(f_r) \}$.

\begin{theorem}\cite{hashemi2021relative}\label{thm:nsyztoesyz}
    The non-zero images of the Schreyer syzygies $\operatorname{Syz}_{\mathcal N}(\mathcal G_M)$ under the map 
    \begin{equation}\label{eqn:quotient}
\begin{aligned}
    \rho:  \mathcal N^p \rightarrow E^r\\
    (v_1, \dots , v_r, v_{r+1}, \dots , v_p) \mapsto & ([v_1], \dots, [v_r]).
\end{aligned}
\end{equation}
form a Gr\"obner basis of the module $\operatorname{Syz}_E(\mathcal G_E)$ with respect to the Schreyer ordering $\succ_{\mathcal G_E}$.
\end{theorem}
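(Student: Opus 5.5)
We will prove the statement in two steps. First we show that $\rho$ maps $\operatorname{Syz}_{\mathcal N}(\mathcal G_M)$ onto $\operatorname{Syz}_E(\mathcal G_E)$. Then we show that the lead terms behave well under $\rho$ with respect to the two Schreyer orderings $\succ_{\mathcal G_M}$ and $\succ_{\mathcal G_E}$. The Gr\"obner property then follows from the lead-module characterisation of Gr\"obner bases recalled at the end of Section~\ref{groebnerbases}, together with its relative analogue in Definition~\ref{def:relative_gb}.

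\textbf{Step 1: surjectivity.} Commutativity of \eqref{diagram-lifting} gives $\pi_m\circ\phi_{\mathcal N}=\phi_E\circ\rho$, so $\rho(\operatorname{Syz}_{\mathcal N}(\mathcal G_M))\subseteq \operatorname{Syz}_E(\mathcal G_E)$. For the reverse inclusion, take $\bar u=([u_1],\dots,[u_r])$ with $\phi_E(\bar u)=0$ and lift it to $u=(\sigma[u_1],\dots,\sigma[u_r])$. Then $\sum_i u_if_i\in\ker\pi_m=I^m$, so it can be written as $-\sum_j v_jb_j$ for some $v_j\in\mathcal N$. Hence $(u,v)\in\operatorname{Syz}_{\mathcal N}(\mathcal G_M)$ and $\rho(u,v)=\bar u$.

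\textbf{Step 2: lead terms.} Let $s\in\operatorname{Syz}_E(\mathcal G_E)$ be nonzero. We must find a Schreyer syzygy $g\in\mathcal G'$ from Theorem~\ref{schreyers-theorem} such that $\operatorname{LM}(\rho(g))$ divides $\operatorname{LM}(s)$ in $E^r$ with respect to $\succ_{\mathcal G_E}$. Lift $s$ square-free to $u$ and complete it to a syzygy $w=(u,v)$ as in Step~1. We want to choose $v$ so that $\operatorname{LM}_{\succ_{\mathcal G_M}}(w)$ lies in one of the first $r$ components and equals $\sigma_r(\operatorname{LM}(s))$ up to sign. Since $\mathcal G_M$ is a Gr\"obner basis of $M$ by Theorem~\ref{theorem:relative-gb}, the element $\sum_iu_if_i\in I^m$ has a standard presentation by the $b_j$. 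Its summands satisfy $\operatorname{LM}(v_jb_j)\le_m \operatorname{LM}(\sum u_if_i)\le_m\max_i\operatorname{LM}(u_if_i)$. By the tie-breaking rule $i<j$ in the Schreyer ordering, indices $1,\dots,r$ beat the indices $r+1,\dots,p$ in case of equality. Therefore $\operatorname{LM}(w)$ is a term of $u$ in one of the first $r$ components.

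As $\mathcal G'$ is a Gr\"obner basis of $\operatorname{Syz}_{\mathcal N}(\mathcal G_M)$, some $g\in\mathcal G'$ has $\operatorname{LM}(g)\mid\operatorname{LM}(w)=x^\alpha e_i$ with $x^\alpha$ square-free. Then $\operatorname{LM}(g)=x^\beta e_i$ with $\beta\le\alpha$, so $x^\beta$ is also square-free and survives under $\pi$. We must check that $x^\beta e_i$ is in fact $\operatorname{LM}(\rho(g))$ for $\succ_{\mathcal G_E}$. All terms of $g$ are $\preceq x^\beta e_i$, and projecting can only kill terms (those containing squares) or leave them unchanged. The Schreyer orderings correspond via \eqref{monorderNE}, because $\operatorname{LM}(x^\gamma f_k)$ for square-free $x^\gamma$ and a surviving product agrees with the $E$-side lead monomial. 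So $\operatorname{LM}(\rho(g))=\pi(x^\beta)e_i$, which divides $\operatorname{LM}(s)$.

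\textbf{Main obstacle.} The hardest part is the compatibility between $\succ_{\mathcal G_M}$ and $\succ_{\mathcal G_E}$. The issue is that $\operatorname{LM}(x^\gamma f_k)$ computed over $\mathcal N$ may contain a square, in which case it does not equal the $E$-lead of $[x^\gamma]\pi_m(f_k)$. We would handle this by using that the $f_k$ are canonical (square-free, reduced) lifts, with $B\cap I^m=\emptyset$. Where the $\mathcal N$-lead of $x^\gamma f_k$ becomes zero in $E$, we would argue via the relative division (Algorithm~\ref{alg:relative-division}) that the corresponding terms can be absorbed into the $b_j$-part of the syzygy. Concretely, we would first prove a lemma: for square-free $x^\gamma$ with $\pi(x^\gamma)\pi_m(f_k)\ne0$, the $E$-lead monomial of $\pi(x^\gamma)\pi_m(f_k)$ is the image of the largest square-free monomial of $x^\gamma f_k$. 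We would then define $\succ_{\mathcal G_E}$ through that lemma, and rerun Step~2 with this refined comparison.
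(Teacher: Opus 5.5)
The paper gives no proof of this statement; it quotes it from \cite{hashemi2021relative}, so your argument has to stand on its own. The genuine gap is your handling of $\succ_{\mathcal G_E}$. In Step~2 you justify the correspondence of the two Schreyer orderings by saying that $\operatorname{LM}(x^\gamma f_k)$ agrees with the $E$-side lead monomial of $[x^\gamma]\pi_m(f_k)$. That holds only when $x^\gamma\operatorname{LM}(f_k)$ is square-free.

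The remedy in your last paragraph does not work either. You propose to define $\succ_{\mathcal G_E}$ through $E$-side lead monomials and rerun Step~2, but that comparison is not a monomial ordering. Take $n=2$, $m=r=1$, the degree reverse lexicographic ordering with $x_1>x_2$, and $f_1=x_1+x_2$, which is a Gr\"obner basis relative to $I$. Then $[x_1]\pi_1(f_1)=x_1x_2=-[x_2]\pi_1(f_1)$. So $x_1e_1$ and $x_2e_1$ receive the same $E$-weight in the same component, and no tie-break separates them. Also $[x_1x_2]\pi_1(f_1)=0$, which leaves $x_1x_2e_1$ with no weight at all. Over $\mathcal N$, however, $\operatorname{LM}(x_1f_1)=x_1^2>x_1x_2=\operatorname{LM}(x_2f_1)$. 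The squares you want to remove are exactly what makes the Schreyer ordering on $E^r$ total and compatible with multiplication. Moreover, Step~2 needs the $\succ_{\mathcal G_M}$-lead of the square-free lift $u$ to correspond to the lead of $s$, and this is no longer guaranteed under your refined comparison.

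The way out is definitional, and no lemma is needed. Take $\succ_{\mathcal G_E}$ to be the transport along $\sigma_r$ of the Schreyer ordering of $(f_1,\dots,f_r)$ on $\mathcal N^r$, i.e.\ of $\succ_{\mathcal G_M}$ restricted to the first $r$ components. This is exactly how $>_{Em}$ is obtained from $>_m$ in \eqref{monorderNE}. The weight of $[x^\gamma]e_k$, for $x^\gamma$ square-free, is then $\operatorname{LM}(x^\gamma f_k)\in\mathcal N^m$, squares allowed. This is also the convention of Example~\ref{eg:lift-by-hand}, where $xe_1'$ is weighted by $x^2e_1$. With this definition, $\operatorname{LM}_{\succ_{\mathcal G_M}}(\sigma_r(s))=\sigma_r(\operatorname{LM}(s))$ holds by construction, and Steps~1 and~2 as written prove the theorem. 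Your choice of $v$ with the tie-break, the bound $\beta\le\alpha$, and the absence of cancellation under $\rho$ are all correct. In the example above, the Schreyer syzygy $(x_1+x_2)e_1'-e_2'-e_3'$ of $\mathcal G_M=(f_1,x_1^2,x_2^2)$ maps to $(x_1+x_2)e_1$. This generates $\operatorname{Syz}_E(\mathcal G_E)=E(x_1+x_2)$ and has lead term $x_1e_1$, as the theorem predicts.

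Two small repairs remain.
\begin{itemize}
\item The presentation of $\sum_iu_if_i\in I^m$ by the $b_j$ alone, with $\operatorname{LM}(v_jb_j)\le_m\operatorname{LM}(\sum_iu_if_i)$, comes from $\{b_j\}$ being a Gr\"obner basis of the monomial module $I^m$. Each $x_k^2$ is central, so every term of an element of $I^m$ contains a square. It does not come from $\mathcal G_M$ being a Gr\"obner basis: a standard presentation with respect to $\mathcal G_M$ may involve the $f_i$, and that would change $\rho(w)$.
\item To conclude in the sense of Definition~\ref{def:relative_gb}, observe that a lead monomial of $h\in\pi_r^{-1}(\operatorname{Syz}_E(\mathcal G_E))$ either contains a square, and so lies in $\operatorname{L}(I^r)$, or is square-free. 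In the latter case it equals $\sigma_r(\operatorname{LM}(\pi_r(h)))$, and Step~2 applies to it.
\end{itemize}
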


\section{Refined Schreyer Algorithm}\label{section:schreyer-algorithm}
In this section, we discuss the refined Schreyer Algorithm proposed by Er{\"o}cal, Motsak, Schreyer and Steenpass in \cite{erocal2016refined} to compute the Gr\"obner basis of a syzygy module. This algorithm is a revised and optimised formulation of the original syzygy algorithm given by Schreyer in \cite{schreyer-thesis}.
Er{\"o}cal, Motsak, Schreyer and Steenpass have devised this algorithm for modules over polynomial rings. We adapt their algorithm to $\mathcal N$-modules. We follow the definitions for terms, monomials and S-vectors, as well as the notation for lead monomials and lead terms as discussed in Section \ref{section:nc-fundamentals}. Recall that the S-vectors over $\mathcal N$ are slightly different than those over the polynomial ring, see Subsection \ref{groebnerbases}. The adaptation of the refined Schreyer algorithm calls for writing all monomials in standard form, in order to avoid sign confusion. We use the induced Schreyer ordering 
\begin{align*}
    me_i \succ m^\prime e_j \iff
    \begin{cases}
\operatorname{LM}(m f_i)>_m\operatorname{LM}(m'f_j),&\text{or}\\
\operatorname{LM}(m f_i)=\operatorname{LM}(m'f_j)\text{ and }i<j.
\end{cases}
\end{align*}

It is important to note that in this section, the word `lift' has a specific connotation, different from that in the previous section. The precise definition is Definition \ref{def:lift}.

\subsection{The Schreyer Frame}

Let $M \subset \mathcal N^m$ be a finitely generated left $\mathcal N$-module, and let $\mathcal G = (f_1, \dots , f_r)$ be an ordered Gr\"obner basis of $M$ with respect to $>_m$. Recall that computing the syzygies of $\mathcal G$ involves computing S-vectors and subtracting their standard presentations, 
\begin{gather*}
    m_{ji} f_i - m_{ij}f_j - \sum_{k = 1}^rg_k^{(ij)}f_k,
\end{gather*}
where $\operatorname{LM}(m_{ji}f_i) = \operatorname{LM}(m_{ij}f_j) > \operatorname{LM}(g_k^{(ij)}f_k)$ for all $1 \leq k \leq r$. Then by applying Schreyer's Theorem \ref{schreyers-theorem}, we obtain the syzygy $m_{ji} e_i - m_{ij}e_j - \sum_{k=1}^rg_k^{(ij)}e_k$. Note that if $i<j$, then the lead monomial of this syzygy is $\operatorname{LM}(m_{ji})e_i$. For any Gr\"obner basis $\mathcal G$ of $M$, $\operatorname{L}_{\succ_{\mathcal G}}(\operatorname{Syz}(\mathcal G))$ is generated by all $m_{ji}e_i$, for $1 \leq j < i \leq r$.
We use this information to compute the lead module of the syzygy module $\operatorname{L}(\operatorname{Syz}(\mathcal G))$. A computation of the lead module gives us a starting point that guides the computation of the entire syzygy module. We now introduce the concept of a Schreyer frame, as defined in \cite[Def. 3.2 - 3.4]{la1998strategies}. 

\begin{definition}\label{def:schreyer-frame}
    \begin{enumerate}
        \item Let $M = F_0 / N$ be a presentation of the finitely generated left $\mathcal N$-module $M$. A Schreyer resolution of $M$ is an exact sequence 
        \begin{gather*}
            \Phi : \dots \xrightarrow[]{\phi_n}  F_{n-1} \xrightarrow[]{\phi_{n-1}} \dots \xrightarrow[]{\phi_1} F_0
        \end{gather*}
        with respect to a Schreyer ordering such that $\text{coker}(\phi_1) = M$ and $\phi_i(e_i)$ is a Gr\"obner basis of $\operatorname{im}(\phi_i)$ with respect to the Schreyer ordering, and $e_i$ is a canonical basis of $F_{i}$
        \item A Schreyer frame is a sequence of free module morphisms
        \begin{gather*}
            \Xi : \dots \xrightarrow[]{\xi_n}  F_{n-1} \xrightarrow[]{\xi_{n-1}} \dots \xrightarrow[]{\xi_1} F_0
        \end{gather*}
        with an ordering $>_m$ on $R^s$ such that $\xi(e_i)$ is a monomial in $F_k$ for any $e_i$ in the canonical basis of $F_{k+1}$, and 
        \begin{itemize}
            \item $\operatorname{LM}(\xi_1(e_i))$ for all $e_i$ in the canonical basis of $F_1$ is a minimal generating set of $\operatorname{L}(N)$.
            \item $\operatorname{LM}(\xi_i(e_j))$ for all $e_j$ in the canonical basis of $F_k$ is a minimal generating set of $\operatorname{L}(\operatorname{ker}(\xi_{i-1}))$.
        \end{itemize}
    \end{enumerate}
\end{definition}

 We first compute a Schreyer frame, and then compute the tail to obtain the exact resolution. The following statement adapted from \cite[Prop. 3.6]{la1998strategies}, with Proposition~\ref{prop:proof-of-correctness}, shows that this results in the desired resolution.
\begin{propn}\cite{la1998strategies}
    For any Schreyer frame $\Xi$ of an $\mathcal N$-module $M$, there exists a Schreyer resolution $\Phi$ of $M$ such that $\Xi = \operatorname{in}(\Phi)$, where $\operatorname{in}(\Phi)$ denotes the sequence of module morphisms 
    \begin{align*}
    \xi_i :& F_i \rightarrow F_{i-1}\\
        &e_j \mapsto \operatorname{LM}(\phi_i(e_j)).
    \end{align*}
\end{propn}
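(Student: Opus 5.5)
We build $\Phi$ inductively, level by level, lifting each monomial map $\xi_i$ to a map $\phi_i$ whose images form Gr\"obner bases. The two tools are the anti-commutative Schreyer Theorem~\ref{schreyers-theorem} and the division algorithm (Algorithm~\ref{alg:n-division}).

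\textbf{Base step.} By definition of a Schreyer frame, the set $\{\operatorname{LM}(\xi_1(e_j))\}$ minimally generates $\operatorname{L}(N)$. For each $j$ choose $g_j\in N$ with $\operatorname{LM}(g_j)=\xi_1(e_j)$, normalised to lead coefficient $1$. Set $\phi_1(e_j)=g_j$. Since the lead monomials of the $g_j$ generate $\operatorname{L}(N)$, \cite[II. Prop. 4.2]{li2007noncommutative} shows that $\{g_j\}$ is a Gr\"obner basis of $N$. Hence $\operatorname{im}(\phi_1)=N$, so $\operatorname{coker}(\phi_1)=M$, and $\operatorname{LM}(\phi_1(e_j))=\xi_1(e_j)$.

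\textbf{Inductive step.} Suppose $\phi_1,\dots,\phi_{i-1}$ have been built with $\operatorname{in}(\phi_k)=\xi_k$, and with $\mathcal G_{i-1}=(\phi_{i-1}(e_j))_j$ a Gr\"obner basis of $\operatorname{im}\phi_{i-1}$. Equip $F_{i-1}$ with the Schreyer ordering $\succ_{\mathcal G_{i-1}}$. The key point is that the lead module $\operatorname{L}(\ker\phi_{i-1})$ depends only on the lead terms of $\mathcal G_{i-1}$. By Theorem~\ref{schreyers-theorem}, it is generated by the terms $m_{jl}e_l$ coming from pairs with nonzero $\operatorname{LCM}$, and these are determined by $\operatorname{LM}(\phi_{i-1}(e_j))=\xi_{i-1}(e_j)$ alone. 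The same description applies to $\operatorname{L}(\ker\xi_{i-1})$, computed as the syzygies of the monomials $\xi_{i-1}(e_j)$ with respect to the same Schreyer ordering. Their Gr\"obner basis consists of the bare monomial syzygies $m_{jl}e_l-m_{lj}e_j$, since an S-vector of monomials is zero. Therefore $\operatorname{L}(\ker\phi_{i-1})=\operatorname{L}(\ker\xi_{i-1})$.

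The frame axiom then says that $\{\xi_i(e_j)\}$ minimally generates this common lead module. For each $j$ pick $s_j\in\ker\phi_{i-1}$ with $\operatorname{LM}(s_j)=\xi_i(e_j)$, and set $\phi_i(e_j)=s_j$. Concretely, $s_j$ is the Schreyer syzygy of Theorem~\ref{schreyers-theorem}, obtained by reducing the appropriate S-vector via Algorithm~\ref{alg:n-division}. By \cite[II. Prop. 4.2]{li2007noncommutative}, $\{s_j\}$ is a Gr\"obner basis of $\ker\phi_{i-1}$, which gives exactness at $F_{i-1}$ and $\operatorname{in}(\phi_i)=\xi_i$.

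\textbf{Main obstacle.} The delicate step is the equality $\operatorname{L}(\ker\phi_{i-1})=\operatorname{L}(\ker\xi_{i-1})$ in the anti-commutative setting, where two things must be checked.
\begin{enumerate}
\item \emph{Signs.} The sign factors $\varepsilon(\alpha,\gamma)$ only affect coefficients, never the monomials $m_{jl}$, so the lead terms agree up to units in $K^*$. Lead coefficients are therefore irrelevant for generating lead modules.
\item \emph{Vanishing products.} Pairs with vanishing LCM must be handled consistently. Because $\mathcal N$ has no zero divisors, this only happens for distinct components, and that case occurs identically for $\phi$ and for $\xi$.
\end{enumerate}
I would also verify that the Schreyer orderings induced by $\mathcal G_{i-1}$ and by its lead monomials coincide. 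This holds because the ordering is defined purely through $\operatorname{LM}(x^\alpha f_j)$, and in $\mathcal N$ this equals $\operatorname{LM}(x^\alpha\operatorname{LM}(f_j))$ by the first axiom in Definition~\ref{def:non-commutative-ring}.
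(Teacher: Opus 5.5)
Your argument is correct and is essentially the proof of \cite[Prop. 3.6]{la1998strategies} that the paper adapts: build $\Phi$ level by level, use the anti-commutative Schreyer theorem to see that $\operatorname{L}(\ker\phi_{i-1})=\operatorname{L}(\ker\xi_{i-1})$ depends only on lead monomials (signs affect only coefficients), and lift each minimal generator of this lead module to a kernel element. The only difference is the source of the lifts: the paper takes them from \texttt{LiftTree} via Proposition~\ref{prop:proof-of-correctness}, whereas you take them from reduced S-vectors (or from abstract existence), so your version does not depend on the correctness of the refined algorithm.
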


The refined Schreyer algorithm relies on Schreyer frame. Given a Gr\"obner basis $\mathcal G$ of $M$, it starts by computing a minimal set of generators of the lead module $\operatorname{L}(\operatorname{Syz}(\mathcal G))$ of $\operatorname{Syz}(\mathcal G) \subset \mathcal N^r$ with respect to the induced Schreyer ordering $\succ_\mathcal G$ on $\mathcal N^r$, see Algorithm~\ref{alg:leadsyz}. We then `lift' each generator of $\operatorname{L}(\operatorname{Syz}(\mathcal G))$ obtained from Algorithm \ref{alg:leadsyz} to get the generators of $\operatorname{Syz}(\mathcal G)$. This is done by computing the tail of each generator term-by-term using  \cite[Def. 4.1]{erocal2016refined}. We denote by $\phi$ the presentation map $\phi: \mathcal N^r \rightarrow \mathcal N^m$ sending the canonical bases $e_i$ to $f_i$.
\begin{definition}\label{def:lift}
    The \textit{lift} of a term $s \in$ $\operatorname{L}(\operatorname{Syz}(\mathcal G)) \subset \mathcal N^r$ with respect to $\mathcal G$ and $\succ_\mathcal G$ is a vector $\Bar s \in \mathcal N^r$ such that  $\operatorname{LT}_{\succ_\mathcal G}(\Bar s) = s$ and $\Bar s \; \in \; \operatorname{Syz}(\mathcal G).$
\end{definition}

\begin{algorithm}
\caption{LeadSyz}\label{alg:leadsyz}
\textbf{Input:} An ordered Gr\"obner basis $\mathcal G = ( f_1, \dots f_r )$ of $M$ with respect to a module monomial ordering $>_m$\\
\textbf{Output:} A minimal monomial generating set of $\operatorname{L}(\operatorname{Syz}(\mathcal G)) \subset F_1 := \mathcal N^r$ with respect to the Schreyer ordering $\succ_{\mathcal G}$.
\begin{algorithmic}[1]
\State $S := \emptyset$
\For {$1 \leq i < j \leq r$}
    \State $t:= \operatorname{LM}(m_{ji})e_i \in F_1$
    \For {$s \in S$}
    \If{ s $|$ t}
        \State $t := 0$
        \State \textbf{break}
    \ElsIf {t $|$ s}
        \State $S := S \; \backslash \; \{ s\}$
    \EndIf
    \EndFor
    \If{ $t \neq 0$}
        \State $S : = S \bigcup \{ t\}$
    \EndIf
\EndFor 
\State \textbf{return} $S$
\end{algorithmic}
\end{algorithm}

\subsection{Lifting the Lead Syzygy Module}\label{subsection:lift}

The tails of the S-vectors are computed using the algorithm \texttt{LiftSyz}, which calls the subroutines \texttt{LiftTree} and \texttt{LiftSubtree}. The details of the commutative versions of the algorithms are \cite[Alg. 3, 6, 6a]{erocal2016refined}. In this paper, we focus on the adaptation of this algorithm over $\mathcal N$ and also its parallelisability. 
\texttt{LiftTree} is applied independently to each generator $s  \in  L(\operatorname{Syz}(\mathcal G))$.

Er\"ocal, Motsak, Schreyer and Steenpass have refined this algorithm by getting rid of terms that are not divisible by any $ f_i \; \in \; \mathcal G$. These terms can therefore be left out as they produce no coefficient in the lifted syzygy. Such terms are defined as follows \cite[Def. 4.2, 4.3]{erocal2016refined}.
\begin{definition}
    Let $S \subset M$ be a set of vectors. We say that a term $t \in M$ is a \textit{lower order term with respect to} $S$ if 
    $$\operatorname{LM}(f) \nmid \operatorname{LM}(t) \; \; \text{for all } f \; \in \; S \backslash \{0\}.$$ 
\end{definition}
We denote the sum of the terms in a polynomial $g$ that are of lower order with respect to $S$ as $\operatorname{LOT}(g|S)$. 
The refined Schreyer Algorithm performs the term-by-term reduction only to the terms of $$g \; := \; \phi(s) - \operatorname{LOT}(\phi(s)|\mathcal G).$$

\begin{definition}
    For an arbitrary term $q \in \mathcal N^p$, its \textit{subtree lifting} with respect to $\mathcal G$ and $>_m$ is a vector $\bar q \in \mathcal N^p$ such that $\operatorname{LT}_{\succ_{\mathcal G}}(\bar q) = q$ and $\phi(\bar q) - \operatorname{LT}(\phi(\bar q))$ is a sum of lower order terms with respect to $\mathcal G$ and $>_m$.
\end{definition}

In the following algorithm, $m \operatorname{LT}(f_i) = t$ means equality of terms including sign coefficient, after the monomials are written in standard form, hence accounting for the anti-commutative products.

\begin{algorithm}
\caption{LiftTree}\label{alg:lifttree}
\textbf{Input:} An ordered Gr\"obner basis $\mathcal G = (f_1, \dots f_r )$ of $M$ and a term $s \in \;  \operatorname{L}(\operatorname{Syz}(\mathcal G))$. \\
\textbf{Output:} A lifting $\Bar{s} \in \operatorname{Syz}(\mathcal G)$ with respect to $\mathcal G$ and the Schreyer ordering $\succ_\mathcal G$.
\begin{algorithmic}[1]
\State $g := \phi(s)$
\State $T := \text{set of terms in }(g - \operatorname{LOT}(g | \mathcal G))$
\State $\Bar{s} := s$
\For{ all $t \in T$}
    \State choose a term $-m e_i$ such that $m \operatorname{LT}(f_i) = t$ and $s \succ_\mathcal G me_i$
    \State $\Bar{s} := \Bar{s} + \texttt{LiftSubtree}(-m e_i)$
\EndFor
\State \textbf{return} $\Bar s$
\end{algorithmic}
\end{algorithm}

\begin{algorithm}
\caption{LiftSubtree}\label{alg:liftsubtree}
\textbf{Input:} An ordered Gr\"obner basis $\mathcal G = (f_1, \dots f_r )$ of $M$ and a term $s \in \; \mathcal N^r$. \\
\textbf{Output:} A subtree lifting $\Bar{s}$ of $s$ with respect to $\succ_\mathcal G$.
\begin{algorithmic}[1]
\State $g := \phi(s) - \operatorname{LT}(\phi(s))$
\State $T := \text{set of terms in }(g - \operatorname{LOT}(g | \mathcal G))$
\State $\Bar{s} := s$
\For{ all $t \in T$}
    \State choose a term $-m e_i$ such that $m \operatorname{LT}(f_i) = t$
    \State $\Bar{s} := \Bar{s} + \texttt{LiftSubtree}(-m e_i)$
\EndFor
\State \textbf{return }$\Bar s$
\end{algorithmic}
\end{algorithm}

Algorithms \ref{alg:lifttree} and \ref{alg:liftsubtree} give rise to a workflow graph that is a tree, see Figure \ref{treegraph}. The root of the tree is the term $s \; \in \; L(\operatorname{Syz}(\mathcal G))$ that is being lifted. Each term $t = m e_i$, where $f_i \; \in \; \mathcal G$, such that $\operatorname{LM}(m f_i)  $ is a term in $ \phi(s) - \operatorname{LOT}(\phi(s)|\mathcal G)$ is then subtree lifted. This gives us only tail terms of lower order with respect to $\mathcal G$ and $>_m$.

\begin{propn}\label{prop:proof-of-correctness}
    Algorithms \ref{alg:lifttree} and \ref{alg:liftsubtree} terminate and produce liftings and subtree liftings of terms, respectively. Consequently, Algorithm \ref{alg:liftsyz} returns a Gr\"obner basis of $\operatorname{Syz}(\mathcal{G})$.
\end{propn}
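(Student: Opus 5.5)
Termination comes first, and it rests on a well-founded descent along the Schreyer ordering $\succ_\mathcal G$. In \texttt{LiftSubtree} applied to a term $s=a\,m_0e_k$, every term $t$ of $g=\phi(s)-\operatorname{LT}(\phi(s))$ satisfies $t<_m\operatorname{LM}(\phi(s))$. The recursive call is made on $-me_i$ with $m\operatorname{LT}(f_i)=t$, so $\operatorname{LM}(mf_i)=\operatorname{LM}(t)<_m\operatorname{LM}(m_0f_k)$, and therefore $me_i\prec_\mathcal G s$. The same inequality holds in \texttt{LiftTree}, where the choice condition $s\succ_\mathcal G me_i$ is imposed explicitly. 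The set $T$ is finite, and $\succ_\mathcal G$ is a well-ordering: it is induced by the well-ordering $>_m$ together with the index tie-break. Hence the recursion tree is finitely branching and has no infinite descending branch, so by K\"onig's lemma it is finite.

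Next I would check that the required choices exist. Every $t\in T$ is not lower order, so some $\operatorname{LM}(f_i)$ divides $\operatorname{LM}(t)$ in the sense of Definition~\ref{def:non-commutative-ring}. The term $m=\operatorname{lquot}(t,\operatorname{LT}(f_i))$ then satisfies $m\operatorname{LT}(f_i)=t$, and the sign $\varepsilon$ built into $\operatorname{lquot}$ accounts for the anticommutation. In \texttt{LiftTree}, for $s=m_{ji}e_i$ a generator with $i<j$, the lead term of $\phi(s)$ cancels against $m_{ij}f_j$. So one needs $m_{ij}e_j$, with the appropriate sign, to be chosen for that lead term, and this holds because $\operatorname{LM}(m_{ji}f_i)=\operatorname{LM}(m_{ij}f_j)$ and $i<j$. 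For every other term $t$ one has $t<\operatorname{LM}(\phi(s))$, which gives $me_i\prec s$ automatically.

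Correctness is proved by strong induction on the recursion tree, that is, on $\succ_\mathcal G$. The claim for \texttt{LiftSubtree}$(s)$ is that $\bar s=s+\sum_{t\in T}\overline{(-m_te_{i_t})}$ satisfies
\[
\phi(\bar s)=\operatorname{LT}(\phi(s))+\operatorname{LOT}(g\mid\mathcal G)+\sum_t\bigl(-t+\text{lower order terms}\bigr),
\]
using the inductive hypothesis $\phi(\overline{-m_te_{i_t}})=-t+{}$(lower order terms). The $t$'s cancel exactly the non-lower-order part of $g$, which leaves $\operatorname{LT}(\phi(\bar s))=\operatorname{LT}(\phi(s))$ plus lower order terms, as required. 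The lead term of $\bar s$ is $s$, since all added summands are $\prec s$.

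The main obstacle is the same argument for \texttt{LiftTree}. There $\phi(\bar s)$ equals a sum of lower order terms with respect to $\mathcal G$ that need not vanish term by term, and this does not immediately show $\bar s\in\operatorname{Syz}(\mathcal G)$. I would argue as follows. The element $\phi(\bar s)$ lies in $M$, and $\mathcal G$ is a Gr\"obner basis, so if $\phi(\bar s)\neq0$ its lead monomial is divisible by some $\operatorname{LM}(f_i)$. That contradicts every term being of lower order, hence $\phi(\bar s)=0$. Care is needed because cancellations among the collected lower-order terms could in principle leave a divisible term. This is ruled out because $\phi(\bar s)$ is a $K$-linear combination of monomials none of which is divisible, so its lead monomial is one of those monomials.

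Finally, each $\bar s$ has $\operatorname{LT}_{\succ_\mathcal G}(\bar s)=s$, and the $s$ range over a generating set of $\operatorname{L}(\operatorname{Syz}(\mathcal G))$ by Algorithm~\ref{alg:leadsyz} and Theorem~\ref{schreyers-theorem}. By the lead-module characterisation of Gr\"obner bases from \cite[II. Prop. 4.2]{li2007noncommutative}, the lifts therefore form a Gr\"obner basis of $\operatorname{Syz}(\mathcal G)$.
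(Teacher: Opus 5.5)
Your proof is correct and follows essentially the paper's route. The paper simply invokes the commutative proof of \cite{erocal2016refined}: well-founded descent in $\succ_{\mathcal G}$, cancellation of every non-lower-order term, and the fact that a nonzero element of $M$ cannot consist only of lower order terms with respect to a Gr\"obner basis. It adds the observation that $m\operatorname{LT}(f_i)=t$ holds including signs, which you make explicit via $\operatorname{lquot}$. One small omission: you check the admissible choice in \texttt{LiftTree} only for generators $s=m_{ji}e_i$, but it extends to any term $s\in\operatorname{L}(\operatorname{Syz}(\mathcal G))$, since such an $s$ is a multiple of some $m_{ji}e_i$ with $i<j$, so $\operatorname{LM}(f_j)$ divides $\operatorname{LM}(\phi(s))$.
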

\begin{proof}

The proof of correctness follows from the commutative proof in \cite{erocal2016refined} as well as the fact that when written in standard form, the terms $m\operatorname{LT}(f_i)$ and $t$ are equal, including the sign coefficient from the anti-commutative product. This ensures that the lead term is canceled at every vertex in the tree. 
\end{proof}

\begin{figure}[ht]
    \hspace{1cm}
     \scalebox{0.80}{
        \begin{tikzpicture}[
			   place/.style = {draw, circle},
            trans/.style = {draw, rectangle},
			]
	
		\node[place,minimum size=1cm] (si) at (0, 9) {$s_i$};
        \node[place,minimum size=1cm] (t1) at (-3, 3) {$t_1$};
        \node[place,minimum size=1cm] (t2) at (0, 3) {$t_2$};
        \node[place,minimum size=1cm] (tl) at (3, 3) {$t_l$};
        \node[place,minimum size=1cm] (tp1) at (-3, -3) {$t\prime _1$};
        \node[place,minimum size=1cm] (tp2) at (0, -3) {$t\prime _2$};
        \node[place,minimum size=1cm] (tpl) at (3, -3) {$t\prime _l$};
         \node[trans] (LiftTree)   at (0, 6) {\texttt{LiftTree}};
         \node[trans] (LiftSubTree1)   at (0, 0) {\texttt{LiftSubTree}};
         \node[trans] (LiftSubTree2)   at (0, -6) {\texttt{LiftSubTree}};

        \node (leadsyz) at (-7, 9) {\text{Lead Syzygy Term}};
        \node (terms1) at (-7, 3) {\text{Terms from first lifting}};
        \node (terms2) at (-7, -3) {\text{Terms from second lifting}};

		\draw[->, >=latex, line width=2pt]
		(si)
		to [out=270, in=90, looseness=1.0]
		(LiftTree); 
        
        \draw[->, >=latex, line width=2pt]
		(LiftTree)
		to [out=270, in=90, looseness=1.0]
		(t2); 
        \draw[->, >=latex, line width=2pt]
		(LiftTree)
		to [out=210, in=60, looseness=1.0]
		(t1);
        \draw[->, >=latex, line width=2pt]
		(LiftTree)
		to [out=330, in=120, looseness=1.0]
		(tl);

        \draw[->, >=latex, line width=2pt]
		(t2)
		to [out=270, in=90, looseness=1.0]
		(LiftSubTree1);
  
        \draw[->, >=latex, line width=2pt]
		(LiftSubTree1)
		to [out=270, in=90, looseness=1.0]
		(tp2);
        \draw[->, >=latex, line width=2pt]
		(LiftSubTree1)
		to [out=210, in=60, looseness=1.0]
		(tp1);
        \draw[->, >=latex, line width=2pt]
		(LiftSubTree1)
		to [out=330, in=120, looseness=1.0]
		(tpl);

        \draw[->, >=latex, line width=2pt]
		(tp2)
		to [out=270, in=90, looseness=1.0]
		(LiftSubTree2);
		\end{tikzpicture}}
    \caption{Workflow graph of \texttt{LiftTree} and \texttt{LiftSubtree}}\label{treegraph}
\end{figure}

Algorithm \ref{alg:lifttree} terminates once there are no terms in $T$ on every branch of the tree. Once the lead terms $\{ s_i \}  \in  \operatorname{L}(\operatorname{Syz}(\mathcal G))$ are lifted, they are then added to finally give us the required Gr\"obner basis of $\operatorname{Syz}(\mathcal G)$.

\begin{algorithm}
\caption{LiftSyz}\label{alg:liftsyz}
\textbf{Input:} An ordered Gr\"obner basis $\mathcal G = ( f_1, \dots f_r )$ of $M$, with respect to a module monomial ordering $>_m$\\
\textbf{Output:} A Gr\"obner basis of $\operatorname{Syz}( \mathcal G) \subset \mathcal N^r$ with respect to the Schreyer ordering $\succ_\mathcal G$.
\begin{algorithmic}[1]
\State $S = \texttt{LeadSyz}(G)$
\State $\Bar{S} = \emptyset$
\For{$s \in S$}
    \State $\Bar{s} = \texttt{LiftTree}(s)$
    \State $\Bar{S} = \Bar{S} \bigcup\{ \Bar{s} \}$
\EndFor
\State \textbf{return} $\Bar{S}$
\end{algorithmic}
\end{algorithm}

\begin{example}\label{eg:lift-by-hand}
    Let $\mathcal N$ be equipped with degree reverse lexicographic monomial ordering, and $M \subset F_0 := \mathcal N^6$ a module, which is the lifting of the $E$-module $\Bar M$ generated by the Gr\"obner basis $\mathcal G$ with ordered elements
    \begin{gather*}
        f_1 = x e_1 + y e_2 + z e_4,\\
        f_2 = x e_2 + y e_3 + z e_5,\\
        f_3 = x e_4 + y e_5 + z e_6,\\
        f_4 = xy e_3 + xz e_5,\\
        f_5 = xy e_5 + xz e_6,\\
        f_6 = xyz e_6.
    \end{gather*}
    Thus $M$ is generated by a Gr\"obner basis of cardinality $24$, where $f_7 $ to $ f_{24}$ are
    \begin{center}
\begin{tabular}{ c c c }
 $f_7 = x^2 e_1$ & $\dots$ & $f_{12} = x^2 e_6$ \\ 
 $f_{13} = y^2 e_1$ & $\dots$ & $f_{18} = y^2 e_6$ \\  
 $f_{19} = z^2 e_1$ & $\dots$ & $f_{24} = z^2 e_6$     
\end{tabular}
\end{center}
     Here, $\{e_1, \dots, e_6\}$ is a canonical basis of $\mathcal N^6$. We know that $\operatorname{Syz}(\mathcal G) \subset \mathcal N^{24}$, whose canonical basis we denote by $\{e_1^\prime , \dots , e_{24}^\prime\}$. Let us assume that the monomial ordering on $\mathcal N^6$ is compatible with that of $\mathcal N$, and gives preference to monomials over components. 

To compute the first lead syzygy term, we notice that there are no
syzygies between $f_1,\dots,f_6$. Thus, the first term computed by our
algorithm in $\operatorname{L}(\operatorname{Syz}(M))$ is
$m_{7,1}e_1^\prime$, where
\begin{gather*}
    m_{7,1}e_1^\prime
    =
    \operatorname{lquot}\bigl(
        \operatorname{LCM}(\operatorname{LM}(f_7),
                           \operatorname{LM}(f_1)),
        \operatorname{LT}(f_1)
    \bigr)e_1^\prime
    =
    xe_1^\prime.
\end{gather*}
Indeed, the two terms $xe_1^\prime$ and $e_7^\prime$ have the same
image lead monomial $x^2e_1$, and $xe_1^\prime\succ_{\mathcal G}
e_7^\prime$ since $1<7$.
We therefore apply \texttt{LiftTree} to $xe_1^\prime$. We have
\begin{align*}
    \phi(xe_1^\prime)
    =
    x^2e_1+xye_2+xze_4,
    \qquad
    T=\{x^2e_1,xye_2,xze_4\}.
\end{align*}
Since
\begin{align*}
    x^2e_1=\operatorname{LT}(f_7),\qquad
    xye_2=(-y)\operatorname{LT}(f_2),\qquad
    xze_4=(-z)\operatorname{LT}(f_3),
\end{align*}
we apply \texttt{LiftSubtree} to
$-e_7^\prime$, $ye_2^\prime$, and $ze_3^\prime$, respectively.
The first of these branches terminates immediately. For the second
branch we obtain
\begin{align*}
    g
    :=
    \phi(ye_2^\prime)
    -
    \operatorname{LT}(\phi(ye_2^\prime))
    =
    y^2e_3+yze_5,
    \qquad
    T=\{y^2e_3\},
\end{align*}
since $yze_5$ is a lower order term. Hence we add $-e_{15}^\prime$.
For the third branch we obtain
\begin{align*}
    g
    :=
    \phi(ze_3^\prime)
    -
    \operatorname{LT}(\phi(ze_3^\prime))
    =
    -yze_5+z^2e_6,
    \qquad
    T=\{z^2e_6\},
\end{align*}
since $-yze_5$ is a lower order term. Hence we add $-e_{24}^\prime$.
All three branches then terminate.
The workflow graph of the complete term-by-term lifting can be seen
in Figure~\ref{figure:lifting}.
\begin{figure}[ht]
\centering
\scalebox{0.72}{
\begin{tikzpicture}[
    place/.style={draw,circle},
    trans/.style={draw,rectangle},
]
    \node[place,minimum size=1cm] (si) at (0,11)
        {$xe_1^\prime$};

    \node[trans] (LiftTree) at (0,9) {
        \begin{tabular}{@{}c@{}}
            $g=x^2e_1+xye_2+xze_4$\\
            $T=\{x^2e_1,xye_2,xze_4\}$
        \end{tabular}
    };

    \node[place,minimum size=1cm] (q0) at (-6,6)
        {$-e_7^\prime$};
    \node[place,minimum size=1cm] (q1) at (0,6)
        {$ye_2^\prime$};
    \node[place,minimum size=1cm] (q2) at (6,6)
        {$ze_3^\prime$};

    \node[trans] (LiftSubTree0) at (-6,4) {$g=0$};

    \node[trans] (LiftSubTree1) at (0,4) {
        \begin{tabular}{@{}c@{}}
            $g=y^2e_3+yze_5$\\
            $T=\{y^2e_3\}$
        \end{tabular}
    };

    \node[trans] (LiftSubTree2) at (6,4) {
        \begin{tabular}{@{}c@{}}
            $g=-yze_5+z^2e_6$\\
            $T=\{z^2e_6\}$
        \end{tabular}
    };

    \node[place,minimum size=1cm] (r1) at (0,1)
        {$-e_{15}^\prime$};
    \node[place,minimum size=1cm] (r2) at (6,1)
        {$-e_{24}^\prime$};

    \node[trans] (LiftSubTree3) at (0,-1) {$g=0$};
    \node[trans] (LiftSubTree4) at (6,-1) {$g=0$};

    \draw[->,>=latex,line width=2pt] (si) -- (LiftTree);

    \draw[->,>=latex,line width=2pt]
        (LiftTree) to[out=210,in=90] (q0);
    \draw[->,>=latex,line width=2pt]
        (LiftTree) -- (q1);
    \draw[->,>=latex,line width=2pt]
        (LiftTree) to[out=330,in=90] (q2);

    \draw[->,>=latex,line width=2pt]
        (q0) -- (LiftSubTree0);
    \draw[->,>=latex,line width=2pt]
        (q1) -- (LiftSubTree1);
    \draw[->,>=latex,line width=2pt]
        (q2) -- (LiftSubTree2);

    \draw[->,>=latex,line width=2pt]
        (LiftSubTree1) -- (r1);
    \draw[->,>=latex,line width=2pt]
        (LiftSubTree2) -- (r2);

    \draw[->,>=latex,line width=2pt]
        (r1) -- (LiftSubTree3);
    \draw[->,>=latex,line width=2pt]
        (r2) -- (LiftSubTree4);
\end{tikzpicture}}
\caption{\texttt{LiftTree} applied to $xe_1^\prime$}
\label{figure:lifting}
\end{figure}
We can infer the total syzygy vector from the terms in
Figure~\ref{figure:lifting}. In this example, it is
\[
    xe_1^\prime-e_7^\prime
    +ye_2^\prime+ze_3^\prime
    -e_{15}^\prime-e_{24}^\prime.
\]
From Theorem~\ref{thm:nsyztoesyz}, its image
\[
    xe_1^\prime+ye_2^\prime+ze_3^\prime
\]
is an element of the Gr\"obner basis obtained from the refined
Schreyer algorithm.
\end{example}

\subsection{Avoiding Evaluations of the Schreyer Ordering}

In order to use the Schreyer ordering without an explicit monomial ordering on the syzygy modules, we use Algorithm~\ref{alg:schrey-order} that checks if $v_1 \succ_{\mathcal G} v_2$, where $v_1, v_2$ are terms in a module in the Schreyer resolution of $M$. This is done using a list $L$ that keeps track of the terms of the generators in the Schreyer frame that is thus far computed. Let $M$ be the module whose free resolution we are computing. Let $F^i$ denote the $i^{th}$ module in the resolution. Then, in order to compare elements in $F^{i+1}$, we use the list of morphisms $\xi_i ,\dots, \xi_1 $ of the Schreyer frame as in Definition~\ref{def:schreyer-frame} and the ordering $>_m$.

\begin{algorithm}[ht]
\caption{SchreyerOrderCheck}\label{alg:schrey-order}
\textbf{Input:} Terms $v_1$ and $v_2$, an ordered list of ordered lists, $(\xi_i, \dots, \xi_1)$, that contains the lead terms of the generators of each module in the resolution that is computed so far. The first entry of the list is the first module in the resolution. $>_m$, the module ordering on $M$.\\
\textbf{Output:} Boolean value \texttt{true} if $v_1 > v_2$, \texttt{false} if $v_1 < v_2$.
\begin{algorithmic}[1]
\State Write $\operatorname{LT}(v_1) := m_ke_k$, $\operatorname{LT}(v_2) := m_je_j$
\State $w_1 = m_k \cdot \xi_i(e_k)$
\State $w_2 = m_j\cdot \xi_i(e_j)$
\If {$i == 1$}
    \State \textbf{return} $w_1 >_m w_2$
\Else
    \State \textbf{return} \texttt{SchreyerOrderCheck}($w_1, w_2, (\xi_{i-1}, \dots, \xi_1 ))$
\EndIf
\end{algorithmic}
\end{algorithm}

Note that Hilbert's syzygy theorem does not necessarily hold over the exterior algebra. This means that a free resolution of some $E$ module may be infinite, as opposed to modules over the polynomial ring, whose free resolutions always have a finite length. Hence, when computing free resolutions over the exterior algebra, one must specify the homological degree until which the desired resolution should be computed.

\section{Sheaf Cohomology via BGG Correspondence}
\label{bgg}
This paper is motivated by an important application of free resolutions over the exterior algebra, namely, computing sheaf cohomology numbers of coherent sheaves over projective space. This is done using an example of a Koszul duality between the derived category of bounded complexes of coherent sheaves over projective space and the stable category of finitely generated modules over the exterior algebra - called the \textit{Bernstein-Gel'fand-Gel'fand correspondence} \cite{bgg78}. An explicit duality was constructed by Eisenbud, Fløystad and Schreyer in \cite{eisenbud2003sheaf}, showing that taking the homology of a complex of sheaves corresponds to the linear part of its Koszul dual - a resolution over the exterior algebra, called the `Tate Resolution', gives an isomorphism, allowing us to obtain the cohomology numbers.

Let $V$ be an $n-$dimensional vector space over an algebraically closed field $K$ and $W$ be its dual. We mean by $\{e_1, \dots, e_n\}$ a basis of $V$, and by $\{ x_1, \dots , x_n \}$ the dual basis of $W$. We give each $e_i$ degree $-1$,  and each $\{x_i\}$ degree $1$. We denote by $S$ the symmetric algebra over the vector space $W$ and by $E$ the exterior algebra $\oplus_{i=0}^n\bigwedge^i V$. 

Finitely generated modules over the symmetric algebra sheafify to coherent sheaves over the projective space $\textbf{P}(W)$. Let $M$ be a graded $S$-module. Then we write $M$ as $$M = \oplus M_d,$$where each $M_d$ is a finite dimensional $K$-vector space, such that there exist multiplication maps $$\mu_d: W \otimes M_d \rightarrow M_{d+1}.$$ 
From $\mu_d$ we get a linear matrix that defines a map from $E \otimes M_d \rightarrow E(-1) \otimes M_{d+1}$.
We construct a functor $\textbf{R}$ that associates to every graded $S-$module a linear free complex of $E-$modules.

\begin{definition}\label{definition:R}
    Denoting $\sigma$ as the map given by $\sum_i e_i \otimes x_i$, given an $S$-module $M$, \textbf{R}$(M)$ is defined as
$$\textbf{R}(M): \dots \xrightarrow[]{\sigma} E \otimes_K M_d \xrightarrow[]{\sigma} E(-1) \otimes_K M_{d+1} \xrightarrow[]{\sigma} \dots.$$
\end{definition}

One can check that $\textbf{R}(M)$ is a complex of modules over $E$. Note that since $\operatorname{Hom}(E,K) \cong E(-n)$, we may rewrite the same functor as 
\begin{align}\label{equation:R}
    \begin{split}
        \textbf{R}(M): \dots \xrightarrow[]{\sigma} \operatorname{Hom}_K(E,M_d )  \xrightarrow[]{\sigma} \operatorname{Hom}_K(E,M_{d+1}) \xrightarrow[]{\sigma} \dots,
    \end{split}
\end{align}
where $\sigma(\alpha) = [e\mapsto \sum_i x_i \alpha(e_i e)]$.
This map is often referred to as the BGG map, and is implemented as \texttt{bgg} in \textsc{Macaulay2} and as \texttt{sym\text{Ext}} in \textsc{Singular}. \textbf{R} gives an equivalence between graded $S$-modules and linear free $E-$complexes \cite[Prop. 2.1]{eisenbud2003sheaf}. \textbf{R} is eventually exact, as shown in \cite[Thm. 3.1]{eisenbud2003sheaf}. The `tail' $\textbf{R}(M_{\geq d})$ for any $d>\operatorname{reg}(M)$ is exact at all positions $i>d$. 

We construct a doubly infinite exact sequence called the Tate resolution by adjoining the minimal free resolution of $\operatorname{ker}(\sigma_d) =\operatorname{ker}[\operatorname{Hom}_K(E,M_d)\rightarrow \operatorname{Hom}_K(E,M_{d+1})]$ to the tail \textbf{R}$(M_{\geq d})$:
\begin{equation}\label{tate-resolution}
    \begin{tikzcd}[row sep=0.7em, column sep=0.25em]
    \textbf{T}: & \dots \arrow[r] & T^{d-2}(M) \arrow[rr] && T^{d-1}(M) \arrow[rr] \arrow[dr] && \text{Hom}_K(E,M_d) \arrow[rr] && \text{Hom}_K(E,M_{d+1}) \arrow[rr] && \dots \\
    &&&&& \operatorname{ker}(\sigma_d) \arrow[ur] \arrow[dr]\\
    &&&& 0 \arrow[ur] && 0
    \end{tikzcd}
\end{equation}
The first row of \eqref{tate-resolution} is the Tate resolution, which does not depend on $d$, only on ensuring that a module $\operatorname{Hom}_K(E, M_d)$ from the exact tail of $\textbf{R}$ is chosen. Moreover, given two modules $M$ and $N$ such that $M_{\geq r} \cong N_{\geq r}$ for some integer $r$ implies that the coherent sheaves $\Tilde{M} \cong \Tilde{N}$ (by Serre's Theorem \cite{serre1955faisceaux}). By \cite[Section 4]{eisenbud2003sheaf}, the Tate resolution does not depend on the module $M$, only on the sheaf $\mathcal F$ which it represents. We may interchange $M$ and $\mathcal F$ when talking of the Tate resolution. The following result, \cite[Thm. 4.1]{eisenbud2003sheaf}, gives us the isomorphism between the modules of the Tate resolution and the cohomology modules of $\tilde M$.

\begin{theorem}[\cite{eisenbud2003sheaf}]\label{theorem:main-tate}
    Using the notation $\omega_E = E \otimes \bigwedge^n W$, we can write the $i^{th}$ term of the Tate resolution $\textbf{T}(M) = \textbf{T}(\mathcal F)$ with cohomological degree $i$ as 
    \begin{gather*}
        T^i = \oplus_j H^j (\mathcal F(i-j))\otimes \omega_E(j-i)
    \end{gather*}
\end{theorem}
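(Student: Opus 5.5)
This is the Eisenbud–Fløystad–Schreyer theorem. My plan follows their argument: compute the linear strands of the Tate resolution through the BGG correspondence and identify them with sheaf cohomology via local duality, or equivalently via the Beilinson monad.

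\textbf{Step 1: reduce to graded pieces.} Since $\mathbf{T}$ is a minimal complex of free $E$-modules, $T^i=\oplus_j \omega_E(j-i)\otimes V_{i,j}$ for certain vector spaces $V_{i,j}$, and
\[
V_{i,j}\cong \operatorname{Tor}^E\text{-type graded pieces } \bigl(K\otimes_E T^i\bigr)_{\text{degree } j-i}.
\]
So it suffices to compute $K\otimes_E \mathbf{T}$ degree by degree, or dually $\operatorname{Hom}_E(K,\mathbf{T})$, i.e.\ the socle.

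\textbf{Step 2: the range $i\ge d>\operatorname{reg}(M)$.} Here $\mathbf{T}=\mathbf{R}(M_{\ge d})$, so $T^i=\omega_E\otimes M_i$, using $\operatorname{Hom}_K(E,M_i)\cong\omega_E\otimes M_i$. Since $H^j(\mathcal F(i-j))=0$ for $j>0$ and $H^0(\mathcal F(i))=M_i$ in this range, the formula holds.

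\textbf{Step 3: general $i$.} Twisting does not change $\mathbf{T}$ except by a shift, so I would compute the strands using a reciprocity argument: the linear strands of a minimal Tate resolution are computed by applying $\mathbf{R}$ and taking homology. Concretely, I use the BGG reciprocity $\operatorname{Ext}$/$\operatorname{Tor}$ comparison. The $j$-th linear strand of $\mathbf{T}$ corresponds under $\mathbf{L}$ to a complex whose homology is $\operatorname{Ext}$ of $M$, which local duality identifies with $H^j_{\mathfrak m}(M)$. In turn, $H^{j+1}_{\mathfrak m}(M)_d \cong H^j(\mathcal F(d))$ for $j\ge1$.

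\textbf{Main obstacle.} Step 3 is the hard part. It requires proving that the strands of the minimal $E$-resolution of $\ker\sigma_d$ exactly recover the cohomology, which relies on exactness of $\mathbf{L}$ of the strands (the EFS linearity theorem) and careful grading and duality bookkeeping. Without citing that machinery, I would instead compare with the Beilinson monad.
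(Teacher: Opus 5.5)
The paper gives no proof of its own here; it quotes \cite[Thm.~4.1]{eisenbud2003sheaf}, so your sketch has to stand as a reconstruction of that argument, and it has a genuine gap. Steps 1 and 2 are fine, but they only cover the easy range $i\ge d>\operatorname{reg}M$. There $T^i=\operatorname{Hom}_K(E,M_i)$ and all higher cohomology vanishes. All the content of the theorem lies in the range $i<d$, and Step 3 does not prove it. Its first sentence essentially asserts that the linear strands of $\mathbf T$ are $\mathbf R$ of the cohomology modules, which is a refinement of the theorem rather than a step towards it. The rest lists ingredients without connecting them, as you concede under ``Main obstacle''. Roughly, the missing chain is the following. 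For $i<d$, the complex $\cdots\to T^{d-2}\to T^{d-1}\to P\to 0$ with $P=\ker\sigma_d$ is the minimal free resolution of $P$, so $T^i$ is governed by $\operatorname{Tor}^E(K,P)$. Since $d>\operatorname{reg}M$, the truncation $M_{\ge d}$ has a linear resolution. Because the homology of $\mathbf R(N)$ computes $\operatorname{Tor}^S(K,N)$, this means $\mathbf R(M_{\ge d})$ is an injective resolution of $P$. BGG reciprocity then gives that $\mathbf L(P)$ is a free resolution of $M_{\ge d}$. Dualising over $S$, $\operatorname{Hom}_S(\mathbf L(P),S)\cong\mathbf L(P^*)$ (up to regrading) computes two things. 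On one side it computes $\operatorname{Ext}_S(M_{\ge d},S)$. On the other, via the Cartan resolution of $K$ over $E$, it computes $\operatorname{Ext}_E(K,P^*)\cong\operatorname{Tor}^E(K,P)^*$. Local duality, $H^k_{\mathfrak m}(N)_e\cong\operatorname{Ext}^{n-k}_S(N,S(-n))_{-e}^{*}$, then expresses the Betti numbers of $P$ through the local cohomology of $M_{\ge d}$.

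The one concrete identification you write down is also not adequate. You take $\operatorname{Ext}$ of $M$ itself and use $H^{j+1}_{\mathfrak m}(M)_e\cong H^j(\mathcal F(e))$ only for $j\ge 1$. But for $i<d$ the term $T^i$ contains the summand $H^0(\mathcal F(i))\otimes\omega_E(-i)$, which is typically nonzero. This summand comes from $H^1_{\mathfrak m}(M_{\ge d})_i=H^0(\mathcal F(i))$, which holds because $M_{\ge d}$ vanishes below degree $d$. You also need $H^0_{\mathfrak m}(M_{\ge d})=0$, which follows from $d>\operatorname{reg}M$. With $M$ in place of $M_{\ge d}$ you would only get $\operatorname{coker}(M_i\to H^0(\mathcal F(i)))$, plus spurious contributions from $H^0_{\mathfrak m}(M)$. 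For example, take $M=S$ and $n\ge 2$: then $H^0_{\mathfrak m}(S)=H^1_{\mathfrak m}(S)=0$, yet $H^0(\mathcal O(i))\neq 0$ must appear in $T^i$ for $0\le i<d$. Passing to the truncation at the regularity is exactly the idea that makes the degree bookkeeping work, and it is absent from your sketch. Finally, the fallback via the Beilinson monad is only named. To read off $T^i$ from the monad you would need two things: that the Tate resolution determines the Beilinson monad, and a cohomological description of the monad's terms. That is no easier than the theorem itself.
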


We use Theorem \ref{theorem:main-tate} to compute useful information about the cohomology of the given sheaf. The following result is a reformulation of Theorem \ref{theorem:main-tate} in \cite{stillman2003computing}.

\begin{corollary}\label{cor:final}
    Let $M$ be a graded $S$ module that represents a sheaf $\mathcal F$. Then the Betti diagram of the Tate resolution $\textbf{T}(M)$ has the following form:
    \begin{equation}
        \begin{tikzcd}[row sep=0.8em, column sep=0.8em]
            \dots & h^0(\mathcal F(1)) & h^0(\mathcal F) & h^0(\mathcal F(-1)) & \dots \\
            \dots & h^1(\mathcal F) & h^1(\mathcal F(-1)) & h^1(\mathcal F(-2)) & \dots \\
            & \vdots && \vdots \\
            \dots & h^{n-1}(\mathcal F(-n +2)) & h^{n-1}(\mathcal F(-n+1)) & h^{n-1}(\mathcal F(-n)) & \dots 
        \end{tikzcd}
    \end{equation}
\end{corollary}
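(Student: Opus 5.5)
We read off Corollary~\ref{cor:final} directly from Theorem~\ref{theorem:main-tate} by tracking the grading carefully. A free $E$-module $\omega_E(a)$ with $\omega_E = E\otimes \bigwedge^n W$ is a rank-one free module generated in one specific degree. The variables $e_i$ have degree $-1$, so $\omega_E$ is generated in degree $n$, namely by $\bigwedge^n W$ with $W$ in degree $1$ as the dual of $V$. Hence $\omega_E(j-i)$ is generated in degree $n-(j-i) = n+i-j$.

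By Theorem~\ref{theorem:main-tate}, the summand of $T^i$ generated in that degree has rank $h^j(\mathcal F(i-j))$. So, for each cohomological position $i$, the graded Betti numbers of $T^i$ are exactly $h^j(\mathcal F(i-j))$ for $0\le j\le n-1$, placed in distinct generator degrees. Here we use that $\mathcal F$ lives on $\mathbf P(W)\cong\mathbf P^{n-1}$, so only $0\le j\le n-1$ occur.

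The next step is to fix the Betti diagram convention used for Tate resolutions, as in \cite{stillman2003computing} and \textsc{Macaulay2}/\textsc{Singular}. The column is indexed by $i$, and the row is indexed by the generator degree shifted by the column index, so that linear strands become rows. Under this convention a generator of $T^i$ in degree $n+i-j$ sits in row $j$, after normalizing the constant offset $n$. The entry in row $j$ and column $i$ is then $h^j(\mathcal F(i-j))$.

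Checking against the displayed diagram fixes the labeling of columns. The middle column has row $0$ equal to $h^0(\mathcal F)$, row $1$ equal to $h^1(\mathcal F(-1))$, and row $n-1$ equal to $h^{n-1}(\mathcal F(-n+1))$. These are the values $h^j(\mathcal F(-j))$, i.e.\ $i=0$. Moving right decreases $i$ by one, which matches the entries $h^0(\mathcal F(-1))$, $h^1(\mathcal F(-2))$, and $h^{n-1}(\mathcal F(-n))$. So the columns are ordered by decreasing $i$, which is consistent with writing the $E$-resolution homologically, with cohomological degree $-i$.

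We also need that each $T^i$ has no other summands. This holds because Theorem~\ref{theorem:main-tate} gives all of $T^i$, and the Tate resolution is minimal: the kernel resolution is chosen minimal and $\mathbf R$ is linear. So the graded Betti numbers are exactly these ranks.

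The main obstacle is purely bookkeeping of sign and degree conventions: the degree of $\omega_E$, the direction of the twist $(j-i)$, and homological versus cohomological indexing in the Betti table. I would handle it by testing the conventions on $\mathcal F=\mathcal O_{\mathbf P^{n-1}}$. There $T$ has a single nonzero row $0$ with entries $h^0(\mathcal O(d))=\binom{n-1+d}{n-1}$ to the left and row $n-1$ with $h^{n-1}$ to the right, which fixes the normalization of rows and columns.
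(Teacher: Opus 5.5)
Your proposal is correct and follows the paper's route. The paper gives no separate argument and presents the corollary as a reformulation of Theorem~\ref{theorem:main-tate}; you make this explicit by noting that the summand $H^j(\mathcal F(i-j))\otimes\omega_E(j-i)$ of $T^i$ is generated in the single degree $n+i-j$, so each fixed $j$ gives a linear strand (a row), and the columns run through the $T^i$ with decreasing $i$, i.e.\ homological degree $-i$.

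The row normalization should be stated more carefully. Regrade the exterior variables to degree $+1$ and take homological index $h=-i$; a generator of $T^i$ then sits in degree $j-n+h$, hence in row $j-n$, which becomes row $j$ after shifting by $n$. Minimality of the Tate resolution is not needed just to read off these ranks.
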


This method of computing cohomology tables is much preferred to that using \v{C}ech complexes due to computation time. This is why the functions \texttt{cohomologyTable} in Macaulay2, \texttt{sheafCohBGG} in \textsc{Singular} and \texttt{sheaf\textunderscore cohomology} in Oscar all use the BGG correspondence. 

The critical step in computing the Tate resolution, in order to obtain the Betti table in Corollary \ref{cor:final}, is to compute a minimal free resolution over the exterior algebra. We consider a resolution to be minimal if the homogeneous entries of the differentials lie in the augmentation ideal $\langle x_1, \dots, x_n\rangle$. Since $E$ has no homogeneous units other than non-zero constants in $K$ and $M$ is graded, a graded minimal resolution has no non-zero constant entries in the differentials. To obtain such a minimised free resolution, we can employ our adapted version of the refined Schreyer algorithm, and then minimise the obtained resolution using the \textsc{Singular} command \texttt{minres}, that eliminates constants using linear algebra. 

\begin{example}\label{eg:minimal-res}
    Using Algorithm \ref{alg:schrey-order}, we can compute a Schreyer resolution over the exterior algebra using \textsc{Singular}. We compute a Schreyer resolution of the module $\Bar M$ defined in Example \ref{eg:lift-by-hand}.

    \begin{lstlisting}
    > sresExt(M,4);
     6      6      10      15      21      
    E <--  E <--  E <--   E <--   E

    0      1      2       3       4       
    resolution not minimized yet
    \end{lstlisting}

    \vspace{-0.2cm}
    We can also compute minimal free resolutions over $E$ in \textsc{Singular}, which is minimised after the entire resolution is computed. 
    We see that the Betti table of \texttt{sresExt} in this range will coincide with the \textsc{Singular} command \texttt{mres}, except for the last column on the right.
    
    \begin{lstlisting}
    > resolution res_ = sresExt(M,4);
    > betti(res_);
    6,3,1,0,0,
    0,0,0,0,3,
    0,0,0,1,10 
    > mresExt(M,4);
     6      3      1      1      3      
    E <--  E <--  E <--  E <--  E

    0      1      2      3      4      
    \end{lstlisting}
    
\end{example}

\section{Notes on Parallelisation}\label{section:parallel}

\subsection{Overview}

In this section, we discuss the potential of a massively parallel formulation and implementation of the refined Schreyer algorithm. This would allow one to handle large examples of free resolutions over the exterior algebra as they occur, for example, in the context of the BGG correspondence. The main observation of this section is that the framework of \cite{gnawali2024massively} for free resolutions over polynomial rings in the commutative case can be extended to the exterior algebra case. This is the subject of ongoing joint work with Gnawali. 

The source of parallelism which we exploit in the refined Schreyer algorithm is that each term to be lifted can be handled independently of the others. The dependency between computations is encoded in a graph. We employ a breadth-first-search to traverse the graph. 

\subsection{The \textsc{Singular}/\textsc{GPI-Space} Framework}

Starting with \cite{BDPRR21}, a framework for massively parallel computations in computer algebra has been developed by the first author and coauthors. This framework has already proven its abilities in various applications ranging from algebraic geometry to high-energy physics, see for example \cite{modular, pfd}. The framework relies on the workflow management system \textsc{GPI-Space} to implement a coordination layer in the sense of Gelernter \cite{gelernter}, while \textsc{Singular} is employed for the computation layer and for serialization. 

To realize a coordination language, the formalism of Petri nets is used \cite{carl-petri}. A \textbf{Petri net} is a directed bipartite graph with two types of vertices: \textbf{places}, which can hold structureless tokens, and \textbf{transitions} which are the computational units. A transition is \textbf{enabled} if all its input places hold at least one token. An enabled transition can \textbf{fire} consuming one token from each input place and producing one token on each output place. A Petri net is executed by firing random enabled transitions. For practical purposes we employ an extended language of Petri nets where tokens are pieces of data (or references to those, implementing reference counting), transitions can impose conditions on the accepted collections of tokens, transition can produce multiple tokens on an output place (shown as double arrows), input arrows to transitions can be read only (dashed arrows), and it is possible to ask for the number of tokens on a place.
Moreover, we take into account that transitions require time to execute. Parallelism arises from firing distinct transitions at overlapping times, called \textbf{task parallelism}, as well as firing the same transition multiple times if the input places each hold a sufficient amount of tokens, called \textbf{data parallelism}. The consistency of the coordination program is not impacted by either.
\begin{example} Figure \ref{petrinet} depicts a Petri net where transitions $T_1$ and $T_2$ can fire in parallel. Moreover, transition $T_2$ can fire in a data-parallel fashion.
\end{example}

\begin{figure}[htbp]
\centering
\begin{tikzpicture}[node distance=1.8cm, on grid, >=stealth', auto]

  \node[place,tokens=1] (p1) {};
  \node[place,tokens=5] (p2) [below=of p1] {};
  \node[place,tokens=4] (p3) [below=of p2] {};

  \node[transition] (t1) [right=of p1, yshift=-0.9cm] {$T_1$};
  \node[transition] (t2) [right=of p3, yshift=0.9cm] {$T_2$};

  \node[place,tokens=0] (p4) [right=of t1, yshift=-0.9cm] {};

  \draw[->] (p1) -- (t1);
  \draw[->] (p2) -- (t2);
  \draw[->] (p3) -- (t2);
  \draw[->] (t1) -- (p4);
  \draw[->] (t2) -- (p4);

  \begin{scope}[xshift=6cm]
    \node[place,tokens=0] (p1') {};
    \node[place,tokens=1] (p2') [below=of p1'] {};
    \node[place,tokens=0] (p3') [below=of p2'] {};

    \node[transition] (t1') [right=of p1', yshift=-0.9cm] {$T_1$};
    \node[transition] (t2') [right=of p3', yshift=0.9cm] {$T_2$};

    \node[place,tokens=5] (p4') [right=of t1', yshift=-0.9cm] {};

    \draw[->] (p1') -- (t1');
    \draw[->] (p2') -- (t2');
    \draw[->] (p3') -- (t2');
    \draw[->] (t1') -- (p4');
    \draw[->] (t2') -- (p4');
  \end{scope}

\end{tikzpicture}
\caption{Petri net before (left) and after (right) firing $T_1$ and $T_2$ on $\geq 5$ cores in parallel.}
\label{petrinet}
\end{figure}

The approach of Gnawali from \cite{gnawali2024massively} can be directly carried over to the exterior algebra setting by considering the algorithms \texttt{LiftTree} and \texttt{LiftSubtree} of Section \ref{section:schreyer-algorithm}. As those algorithms are recursive in nature, Gnawali first translates them (in their polynomial ring variants) into an iterative form and then develops a Petri net for coordinating the syzygy and thus the resolution algorithm. 

 Recall from Section~\ref{subsection:lift} that the lifting of lead terms is done over the ring $\mathcal N$. The parallelised syzygy algorithm is also performed over $\mathcal N$. The resolution over $E$ is obtained using relative Gr\"obner bases, as described in Section~\ref{section:syz-ext}. 
Note that in our setting, resolutions over $E$ will not necessarily terminate, and thus we have to specify a bound on the homological degree to be computed.

The lifting summands are combined by the parallel addition subnet in Figure \ref{fig:Parallel_sum}.

 \begin{figure}[ht]
 \centering
 	\scalebox{0.80}{
 		\begin{tikzpicture}[
 			   place/.style = {draw, circle},
            trans/.style = {draw, rectangle},
			]
	
		\node[place] (I) at (-6, 0) {\texttt{Input}};
          \node[place] (L1) at (-3, 0) {\texttt{ctrl}};
            \node[place] (R1) at (0, 0) {\texttt{rhs}};
       \node[place] (L) at (2, 2) {\texttt{L}};
        \node[place] (R) at (2, -2) {\texttt{R}};

         \node[trans] (T1)   at (-2, 2) {\texttt{LHS}};
	    \node[trans] (T2)   at (-2, -2) {\texttt{RHS}};
		
		\node[trans] (O) at ( 4, 0) {\texttt{Add}};

		\draw[->, >=latex, line width=2pt]
		(I)
		to [out=30, in=210, looseness=1.0]
		(T1); 
		
		\draw[->, >=latex, line width=2pt]
		(I)
		to [out=330, in=150, looseness=1.0]
		(T2); 
		
		\draw[->, >=latex, line width=2pt]
       (T1)
       to
       (L);
       
 \draw[->, >=latex, line width=2pt]
		(T2)
		to [out=0, in=180, looseness=1.0]
		(R);      
  \draw[->, >=latex, line width=2pt]
		(L1)
		to [out=60, in=240, looseness=1.0]
		(T1);

  \draw[->, >=latex, line width=2pt]
		(T2)
		to [out=120, in=270, looseness=1.0]
		(L1);

  \draw[->, >=latex, line width=2pt]
		(R1)
		to [out=260, in=60, looseness=1.0]
		(T2);

  \draw[->, >=latex, line width=2pt]
		(T1)
		to [out=300, in=120, looseness=1.0]
		(R1);

   \draw[->, >=latex, line width=2pt]
		(R)
		to [out=30, in=210, looseness=1.0]
		(O);
  
    \draw[->, >=latex, line width=2pt]
		(L)
		to [out=330, in=120, looseness=1.0]
		(O);

    \draw[->, >=latex, line width=2pt]
		(O)
		to [out=270, in=270, looseness=1.0]
		(I);
  
		\end{tikzpicture}}
	\caption{Petri net for parallel addition}
	\label{fig:Parallel_sum}
\end{figure}

\subsection{Iterative Schreyer Algorithm}

The algorithm \texttt{LiftSubtree} of Section \ref{section:schreyer-algorithm} is called recursively and hence is not suitable for use in parallel computation. Rather, we reformulate it into an iterative algorithm.

Recall that in \texttt{LiftTree} and \texttt{LiftSubtree}, see Subsection \ref{subsection:lift}, we start with a term $s \in \operatorname{L}(\operatorname{Syz}(\mathcal G))$ where $\mathcal G=(f_1, \dots, f_r)$ is an ordered Gr\"obner basis. In the iterative version, we compute a set $T$ of all the terms in $\phi(s)$ which are not lower order terms. Then, for each $t \in T$, we compute a term $-m e_i$ such that $m \operatorname{LT}(f_i) = t$. In \texttt{LiftTree}, one extra condition, namely that $s \succ_\mathcal G m e_i$, must be satisfied. The term $-m e_i$ is then added to the tail of the syzygy and also lifted. These liftings can be done iteratively instead of recursively. Here we present an iterative version of Algorithm \texttt{LiftSubtree}, see Algorithm \ref{alg:lifttree-2}. 

\begin{algorithm}[ht]
\caption{Iterative version of LiftSubtree}\label{alg:lifttree-2}
\textbf{Input:} A Gr\"obner basis $\mathcal G = \{f_1, \dots f_r \}$ of $M$ with respect to $>_m$ and a term $s \in $L(Syz($\mathcal G))$. \\
\textbf{Output:} A lifting $\Bar{s} \in Syz(\mathcal G)$ with respect to $\mathcal G$ the Schreyer ordering $\succ_m$.
\begin{algorithmic}[1]
\State $g := \phi(s) - \text{LT}(\phi(s))$
\State $T := \text{set of terms in }(g - \text{LOT}(g | \mathcal G)$
\State $L := $ empty list
\For{ all $t \in T$}
    \State choose a term $-m e_i$ such that $m \text{LT}(f_i) = t$
    \State append $-m e_i$ to $L$
\EndFor
\State \textbf{return }$L$
\end{algorithmic}
\end{algorithm}

In this iterative version of the algorithm, each iteration generates a term $-me_i$ of the syzygy vector. These terms are collected in a list $L$, such that they must simply be added at the end. In a similar way, an iterative version of \texttt{LiftTree} can be formulated.
Since each term $me_i$ is multiplied by $-1$, the terms can simply be added after \texttt{LiftTree} and \texttt{LiftSubtree} have run through all terms, which can be done in parallel.  

\subsection{Formulation as a Petri net}

Figure \ref{fig:petri-net-syz} is a graph that encodes the Petri net of the parallel computation of free resolutions in the commutative setting in \cite{gnawali2024massively},\cite{G25}. In this net, \texttt{LeadSyz} produces lead terms of syzygies, \texttt{Lift} and \texttt{SubLift}
perform liftings or subtree liftings, and the subnet for parallel addition, as in Figure \ref{fig:Parallel_sum}, combines the resulting summands. The counter and
certificate ensure that all terms are added.

This net is also applicable for the computation of syzygies over $\mathcal N$, because module addition is commutative and all non-commutative arithmetic
is local to the transitions. In the adaptation, \texttt{LeadSyz}, \texttt{Lift}, and \texttt{SubLift} perform the
adapted algorithms of Section~\ref{section:schreyer-algorithm}. 
The resolution is computed iteratively by first finding an $\mathcal N$-Gr\"obner basis of the syzygy module of the lifted module, then applying $\rho$ from Theorem~\ref{thm:nsyztoesyz} and deleting images equal to $0$. 

The parallel implementation relies on
kernel-level implementations of the transitions. These also serve as a fast kernel-level
sequential version. Implementation details and performance results are deferred to future
work.

\begin{figure}[ht]

    \hspace{1cm}
    \scalebox{0.51}{
    
        \begin{tikzpicture}[
            place/.style = {draw, circle},
            trans/.style = {draw, rectangle},
        ]
\node[place] (I) at (-12, 2) {\texttt{Input}};
\node[trans] (T1) at (-8, 2 ) {\texttt{LeadSyz}};
 \node[place] (p1)   at ( -3, 1) {\texttt{Lead}};
\node[trans] (T3)    at (-2,  -3) {\texttt{Lift}};
\node[place] (p3)    at (-2,  -5) {\texttt{Tau}};
\node[trans] (T4)    at (-2.5,  -8) {\texttt{SubLift}};
\node[place] (p4)    at (2,  -12) {\texttt{sub}};
\node[trans] (T5)    at ( -5,  -12) {\texttt{LHS}};
\node[trans] (T7)    at (-5,  -16) {\texttt{RHS}};
\node[place] (p10)    at (0,  -16) {\texttt{rep}};
\node[place] (p12)    at (-9,  -14) {\texttt{Tau'}};
\node[place] (p13)    at (-3,  -14) {\texttt{Ctrl}};
\node[place] (p14)    at (-6,  -14) {\texttt{ST}};
\node[trans] (T9)    at (2,  -14) {\texttt{ADD}};
\node[place] (p15)    at (-3,  -20) {\texttt{Syz}};
\node[place] (p18)    at (1,  -8) {\texttt{nterms}};
\node[place] (p20)    at (5,  -10) {\texttt{counter}};
\node[trans] (T14)    at (5,  -12) {\texttt{Decrement}};
\node[draw, rectangle, text width=3cm, align=center, minimum width = 4cm] (T13) at (9, -11) {
        \begin{tabular}{@{}c@{}}
            \texttt{Extract}\\
            \texttt{if counter=1}\\
            \texttt{and certificate}
        \end{tabular}
    };
\node[place] (p19)    at (4.5,  -14) {\texttt{nsummand}};
\node[trans] (T12)    at (3,  -9) {\texttt{Increment}};
\node[cloud, cloud puffs=15.7, cloud ignores aspect, minimum width=2cm, minimum height=2cm, align=center, draw](cloud1) at (9, 0) {Certificate};

\draw[dotted, ->, >=latex, line width=2pt] (I) to [out=00, in=180, looseness=1.0] (T1);

   \draw[double,->, >=latex, line width=2pt] (T1) to [out=0, in=180, looseness=1.0] (p1);

   \draw[->, >=latex, line width=2pt] (p1) to [out=300, in=90, looseness=1.0] (T3);

   \draw[dotted, ->, >=latex, line width=2pt] (I) to [out=300, in=160, looseness=1.0] (T3);

   \draw[double,->, >=latex, line width=2pt] (T3) to [out=270, in=90, looseness=1.0] (p3);

   \draw[->, >=latex, line width=2pt] (p3) to [out=270, in=90, looseness=1.0] (T4);

   \draw[double, ->, >=latex, line width=2pt] (T4) to [out=20, in=330, looseness=1.0] (p3);
   
 \draw[dotted, ->, >=latex, line width=2pt] (I) to [out=270, in=150, looseness=1.0] (T4);

      \draw[->, >=latex, line width=2pt] (T5) to [out=00, in=180, looseness=1.0] (p4);
      
      
 \draw[ ->, >=latex, line width=2pt] (T7) to [out=0, in=180, looseness=1.0] (p10);
 
    \draw[->, >=latex, line width=2pt] (T4) to [out=0, in=180, looseness=1.0] (p18);
   
  \draw[->, >=latex, line width=2pt] (p18) to [out=0, in=120, looseness=1.0] (T12);
     
     \draw[->, >=latex, line width=2pt] (T4) to [out=210, in=70, looseness=1.0] (p12);

     \draw[->, >=latex, line width=2pt] (p12) to [out=30, in=210, looseness=1.0] (T5);

     \draw[->, >=latex, line width=2pt] (p12) to [out=300, in=150, looseness=1.0] (T7);
     
     \draw[->, >=latex, line width=2pt] (p13) to [out=120, in=330, looseness=1.0] (T5);

     \draw[->, >=latex, line width=2pt] (p14) to [out=330, in=100, looseness=1.0] (T7);
     
     \draw[->, >=latex, line width=2pt] (T7) to [out=30, in=240, looseness=1.0] (p13);

     \draw[->, >=latex, line width=2pt] (T5) to [out=270, in=60, looseness=1.0] (p14);

         \draw[ ->, >=latex, line width=2pt] (p4) to [out=270, in=90, looseness=1.0] (T9);

         \draw[ ->, >=latex, line width=2pt] (p10) to [out=30, in=210, looseness=1.0] (T9);

     \draw[->, >=latex, line width=2pt] (T9) to [out=300, in=270, looseness=1.0] (p12);

     \draw[->, >=latex, line width=2pt] (p4) to [out=30, in=190, looseness=1.0] (T13);
     
     \draw[->, >=latex, line width=2pt] (T13) to [out=330, in=00, looseness=1.0] (p15);
         
          \draw[ ->, >=latex, line width=2pt] (p19) to [out=60, in=270, looseness=1.0] (T14);
          
          \draw[ ->, >=latex, line width=2pt] (T14) to [out=30, in=00, looseness=1.0] (p20);
          
         \draw[ ->, >=latex, line width=2pt] (p20) to [out=180, in=150, looseness=1.0] (T14);

          \draw[ ->, >=latex, line width=2pt] (T9) to [out=0, in=180, looseness=1.0] (p19);

         \draw[ ->, >=latex, line width=2pt] (p20) to [out=60, in=90, looseness=1.0] (T12);
         
          \draw[ ->, >=latex, line width=2pt] (T12) to [out=270, in=165, looseness=1.0] (p20);

         \draw[ dotted, ->, >=latex, line width=2pt] (p20) to [out=45, in=150, looseness=1.0] (T13);

         \draw[ double, ->, >=latex, line width=2pt] (T1) to [out=240, in=120, looseness=1.0] (p12);

         \draw[ double, ->, >=latex, line width=2pt] (T1) to [out=40, in=60, looseness=1.0] (p18);

         \draw[ double, ->, >=latex, line width=2pt] (T1) to [out=50, in=150, looseness=1.0] (cloud1);

         \draw[ double, ->, >=latex, line width=2pt] (T3) to [out=40, in=200, looseness=1.0] (cloud1);

         \draw[ double, ->, >=latex, line width=2pt] (T4) to [out=20, in=210, looseness=1.0] (cloud1);

         \draw[->, >=latex, line width=2pt] (cloud1) to [out=270, in=90, looseness=1.0] (T13);

\end{tikzpicture}
}
\caption{Petri net for syzygy computation}
    \label{fig:petri-net-syz}
\end{figure}

\bibliographystyle{alpha} 
\bibliography{references}

\end{document}